\numberwithin{equation}{section}
\numberwithin{figure}{section}
\newcommand*{\mailto}[1]{\href{mailto:#1}{\nolinkurl{#1}}}
\newtheorem{corollary}{Corollary}
\newtheorem{theorem}{Theorem}
\newtheorem{lemma}{Lemma}
\newtheorem{remark}{Remark}
\theoremstyle{proposition}
\begin{document}
\thispagestyle{empty}

{\center \large\bf Direct and inverse spectral problems for the Schr\"{o}dinger operator with double generalized Regge boundary conditions}
\\[0.5cm]
\noindent {\bf  Xiao-Chuan Xu}\footnote{School of Mathematics and Statistics, Nanjing University of Information Science and Technology, Nanjing, 210044, Jiangsu,
People's Republic of China, {\it Email:
xcxu@nuist.edu.cn}}
\noindent {\bf and Yu-Ting Huang}\footnote{School of Mathematics and Statistics, Nanjing University of Information Science and Technology, Nanjing, 210044, Jiangsu,
People's Republic of China, {\it Email: hyt00142024@163.com}}
\\

\noindent{\bf Abstract.}
{In this paper, we study the direct and inverse spectral problems for the Schr\"{o}dinger operator with two generalized Regge boundary conditions. For the direct problem, we give the properties of the spectrum, including the asymptotic distribution of the eigenvalues. For the inverse problems, we prove several uniqueness theorems,  including the cases: even potential, two-spectra, as well as the general partial inverse problem.}

\medskip
\noindent {\it Keywords:} Schr\"{o}dinger operator, generalized Regge boundary condition, spectral asymptotics, inverse spectral problem
\medskip

\noindent {\it 2020 Mathematics Subject Classification:} 34A55; 34L20; 34L40; 34B07
\section{Introduction}

Consider the following generalized Regge problem $L\left(q,\alpha_{0},\beta_{0},\alpha,\beta\right)$
\begin{equation}\label{k1}
  \ell y:=-y''+q(x)y=\lambda^2 y ,\quad x\in(0,a),\\
  \end{equation}
  \begin{equation}\label{k2}
  \quad y'\left(0\right)-\left(i\alpha_{0}\lambda + \beta_{0}\right)y\left(0\right)=0,\\
  \end{equation}
 \begin{equation}\label{k3}
  \quad y'\left(a\right)+\left(i\alpha\lambda +\beta\right)y\left(a\right)=0,\\
\end{equation}
where the complex-valued potential function$q\in L^2(0,a)$, $\alpha_{0}\ge0,\alpha>0$ and $\beta_{0},\beta\in\mathbb{C}$.
The problem $L\left(q,\alpha_{0},\beta_{0},\alpha,\beta\right)$ arises in various mathematical and physical models. For example, the Liouville transformation can transform the problem of smooth inhomogeneous string vibration with viscous damping at both ends of the string into the problem $L\left(q,\alpha_{0},\beta_{0},\alpha,\beta\right)$ with $\alpha,\alpha_0>0$. When $\alpha_{0}=0$, it describes the problem of an inhomogeneous string vibration with no  damping at the left end but with various damping at the right end \cite{MV, mv}. The  problem $L\left(q,1,0,1,0\right)$ is the resonance scattering problem on the real line \cite{EK,ST,XF,Zo1}.

In the direct and inverse spectral theories, the Schr\"{o}dinger operator with boundary conditions independent of spectral parameters, i.e., the problem $L\left(q,0,\beta_{0},0,\beta\right)$, has the comparatively perfect researches (see the  monographs \cite{GV,Bb,V}). For the inverse spectral problem of $L\left(q,0,\beta_{0},0,\beta\right)$,  in general,  in order to recover the potential and the parameters in the boundary conditions, one needs to specify two spectra. Whereas, in some cases, one spectrum is enough, such as, (i) the even inverse problem \cite{GV}: $q(x)=q(a-x)$ and $\beta=\beta_0$, and (ii) the half-inverse problem \cite{HB}: $\beta$ is given and $q(x)$ is known a priori on $(\frac{a}{2},a)$. After the half-inverse problem, many authors also continued to study the general partial inverse problems, i.e.,  the potential is given on a subinterval $(b,a)$ with arbitrary $b\in (0,a)$. The uniqueness theorems of the general partial inverse problems were proved in \cite{DGS,FB,M,MH} and other works. One can refer to the review \cite{BP} for the general summaries of partial inverse problems.

When the boundary conditions contain the spectral parameter $\lambda$, the results of the problem will change. There are many works related Schr\"{o}dinger operator with the boundary condition dependent on $\lambda^2$ (see, e.g., \cite{BN1,BN2,FY0,G,G1,WW} and the references therein).
The dependency form of spectral parameters in the boundary condition \eqref{k2} or \eqref{k3}  is different from the works mentioned above. The boundary condition \eqref{k2} or \eqref{k3} is called the generalized Regge boundary condition. This kind of boundary condition has attracted a lot of attentions  from many scholars (see, e.g., \cite{BY,BS,WY,mv,RS,VC,XP,x,X,Y}). When there is only one generalized Regge boundary condition, one spectrum uniquely recovers all the unknown coefficients \cite{mv,VC,XP,EK0,RS,x,X,Y}. When there are two  generalized Regge boundary conditions, the situation becomes different, in particular, one spectrum is not enough to uniquely recover all the unknown coefficients. As far as we know, few ones considered two generalized Regge boundary conditions \eqref{k2} and \eqref{k3} directly  for the Schr\"{o}dinger operator. Whereas, these two boundary conditions were studied by some scholars for the  differential pencil  (see \cite{BY,BS,WY})
 \begin{equation}\label{hx9}
   -y''+(2\lambda p(x)+q(x))y=\lambda^2 y.
 \end{equation}
In \cite{BY,WY}, the two-spectra theorem is proved for the problem \eqref{k2}-\eqref{hx9}:  the spectrum of the problem \eqref{k2}-\eqref{hx9} and the spectrum of the problem \eqref{k3}, \eqref{hx9} and $y(0)=0$ uniquely determine $p$ and $q$ as well as all the unknown parameters in \eqref{k2} and \eqref{k3}. The uniqueness theorem of the half inverse problem was proved in \cite{BS}. 
Although \eqref{k1} is a particular case of \eqref{hx9}, the two-spectra theorem in \cite{BY,WY} is not applicable for the problem $L\left(q,\alpha_0,\beta_{0},\alpha,\beta\right)$ considered here. Because the spectrum of the problem \eqref{k3}, \eqref{hx9} and $y(0)=0$ with $p=0$ is enough to uniquely determine all the unknowns \cite{VC,X}. Hence, the two-spectra theorem for the problem $L\left(q,\alpha_0,\beta_{0},\alpha,\beta\right)$ should be reformulated and proved. Moreover, there is no result for the even inverse problem or the general partial inverse problem of the problem $L\left(q,\alpha_0,\beta_{0},\alpha,\beta\right)$ with $(1-\alpha)(\alpha_0-1)\ne0$, which are also interesting and important issues and should be investigated.

In this  paper, we study the direct and inverse spectral problems for the boundary value problem $L\left(q,\alpha_{0},\beta_{0},\alpha,\beta\right)$. For the direct problem, we give some properties of the eigenvalues, in particular, give the  asymptotic behavior. For the inverse spectral problem, we prove four uniqueness theorems, including the two-spectra theorem, even inverse problem as well as the general partial inverse problem.


This paper is structured as follows. In the second section, we introduce three characteristic functions. In the third section, we investigate the asymptotic distribution of eigenvalues and the fundamental properties of eigenvalues in the lower plane. In the last section, we consider the inverse problems and prove the uniqueness theorems.

\section{Characteristic functions}
In this section, we introduce three characteristic functions.
 Let $s\left(\lambda,x\right)$ and $c\left(\lambda,x\right)$ be the solutions of the initial value problems for \eqref{k1} with the initial conditions $s\left(\lambda,0\right)=0$, $s'\left(\lambda,0\right)=1$ and $c\left(\lambda,0\right)=1$, $c'\left(\lambda,0\right)=\beta_{0}$, respectively.
It was shown in \cite[p.9]{V} that
\begin{equation}\label{k4}
    s\left(\lambda,x\right)=\frac{\sin{\lambda x}}{\lambda}+\int_{0}^{x}K_{1}\left(x,t\right)\frac{\sin{\lambda t}}{\lambda}dt,
\end{equation}
\begin{equation}\label{k5}
    c\left(\lambda,x\right)=\cos{\lambda x}+\int_{0}^{x}G\left(x,t,\beta_{0}\right)\cos{\lambda t}dt,
\end{equation}
where
\begin{equation}\label{k6}
    K_{1}\left(x,t\right)=K\left(x,t\right)-K\left(x,-t\right),
\end{equation}
\begin{equation}\label{k6}
    G\left(x,t,\beta_{0}\right)=\beta_{0}+K\left(x,t\right)+K\left(x,-t\right)+\beta_{0}\int_{t}^{x}[K\left(x,\xi\right)-K\left(x,-\xi\right)]d\xi,
\end{equation}
and $K\left(x,t\right)$ is the unique solution of the integral equation

\begin{equation}
\notag
    K\left(x,t\right)=\frac{1}{2}\int_{0}^{\frac{x+t}{2}}q(s)ds+
    \int_{0}^{\frac{x+t}{2}}\int_{0}^{\frac{x-t}{2}}q\left(s+v\right)K\left(s+v,s-v\right)dv ds,
\end{equation}
in the region $\left\{\left(x,t\right)\in (0,a)\times(0,a):\lvert t \rvert\leq x \right\}$. Moreover,
\begin{equation}\label{mza}
    G\left(x,x,\beta_0\right)=\beta_0+K_{1}\left(x,x\right),\quad K_{1}\left(x,x\right)=K\left(x,x\right)=\frac{1}{2}\int_{0}^{x}q\left(t\right)dt.
\end{equation}
Let
\begin{equation}\label{hx4}
        y\left(\lambda,x\right)=c\left(\lambda,x\right)+i\alpha_{0}\lambda s\left(\lambda,x\right).
\end{equation}
Then
\begin{equation}\label{r2}
    y\left(\lambda,x\right)=\cos{\lambda x}+i\alpha_{0}\sin{\lambda x}+G(x,x,\beta_{0})\frac{\sin{\lambda x}}{\lambda}-i\alpha_{0}K_{1}\left(x,x\right)\frac{\cos{\lambda x}}{\lambda}+\frac{\Psi_{1}\left(\lambda,x\right)}{\lambda},
\end{equation}
and
\begin{equation}\label{k8}
        y'\left(\lambda,x\right)=-\lambda \sin{\lambda x}+\left[G(x,x,\beta_{0})+i\alpha_{0}\lambda\right]\cos{\lambda x}+i\alpha_{0}K_{1}\left(x,x\right)\sin{\lambda x}+\Psi_{2}\left(\lambda,x\right),
\end{equation}
where
\begin{equation}\label{hx}
    \Psi_{1}\left(\lambda,x\right)=-\int_{0}^{x}G_{t}\left(x,t,\beta_{0}\right)\sin{\lambda t}dt+i\alpha_{0}\int_{0}^{x}K_{1t}\left(x,t\right)\cos{\lambda t}dt,
\end{equation}
and
\begin{equation}
    \Psi_{2}\left(\lambda,x\right)=\int_{0}^{x}G_{x}\left(x,t,\beta_{0}\right)\cos{\lambda t}dt+i\alpha_{0}\int_{0}^{x}K_{1x}\left(x,t\right)\sin{\lambda t}dt.
\end{equation}
 The spectrum of problem $L\left(q,\alpha_{0},\beta_{0},\alpha,\beta\right)$ coincides with the sets of zeros of the entire function
\begin{equation}\label{k9}
    \Delta_+\left(\lambda\right)=y'\left(\lambda,a\right)+\left(i\alpha\lambda+\beta\right)y\left(\lambda,a\right),
\end{equation}
 which is called the \emph{characteristic function} of the problem $L\left(q,\alpha_{0},\beta_{0},\alpha,\beta\right).$ Denote $\left<y,z\right>=yz'-y'z$. It is easy to show that if $y$ and $z$ are solutions of \eqref{k1}, then $\left<y,z\right>$ is independent of $x$. Let $\phi\left(\lambda,x\right)$ be the solution of the problem for \eqref{k1} with initial conditions $\phi\left(\lambda,a\right)=1$ and $\phi'\left(\lambda,a\right)=-\left(i\alpha\lambda+\beta\right).$ Then we have
 \begin{equation}\label{e10}
        \Delta_+\left(\lambda\right)=\left<\phi\left(\lambda,x\right),y\left(\lambda,x\right)\right>.
    \end{equation}
    Define
    \begin{equation}\label{e15}
        \Delta_{-}\left(\lambda\right)=y'\left(\lambda,a\right)-\left(i\alpha\lambda-\beta\right)y\left(\lambda,a\right).
    \end{equation}
     Obviously, $\Delta_{-}\left(\lambda\right)$ is the \emph{characteristic function} of the problem $L\left(q,\alpha_{0},\beta_{0},-\alpha,\beta\right)$, i.e., the equation \eqref{k1} with the boundary conditions \eqref{k2} and $$y'\left(\lambda,a\right)-\left(i\alpha\lambda-\beta\right)y\left(\lambda,a\right)=0.$$
  Using \eqref{k9} and \eqref{e15}, it is easy to verify
    \begin{equation}\label{e7}
        \Delta_+\left(\lambda\right)\Delta_+\left(-\lambda\right)-\Delta_{-}\left(\lambda\right)\Delta_{-}\left(-\lambda\right)=4\alpha\alpha_{0}\lambda^2.
    \end{equation}
     We also consider the function
    \begin{equation}\label{o30}
        \Delta_{0}(\lambda)=y\left(\lambda,a\right).
    \end{equation}
It is easy to show that the function $\Delta_{0}(\lambda)$ is the \emph{characteristic function} of the following problem
\begin{equation}\label{hx12}
    -y''+q_{1}(x)y=\lambda^2 y ,\quad x\in(0,a),
\end{equation}
\begin{equation}\label{hx13}
    y(0)=0,
\end{equation}
\begin{equation}\label{hx14}
    y'(a)+(i\alpha_{0}\lambda+\beta_{0})y(a)=0,
\end{equation}
where $q_{1}(x)=q(a-x)$.

 \section{Properties of eigenvalues}
 In this section, we investigate the properties of the spectrum of the problems $L\left(q,\alpha_{0},\beta_{0},\pm\alpha,\beta\right)$. Let's first give the asymptotic distribution of the eigenvalues.
\begin{theorem}\label{th1}
     $\left(1\right)$ If $\left(\alpha_{0}-1\right)\left(1-\alpha\right)>0$, then the eigenvalues $\{\lambda_{k}^\pm\}_{k\in\mathbb{Z}}$ of the problems $L\left(q,\alpha_{0},\beta_{0},\pm\alpha,\beta\right)$, respectively,  have the following asymptotic behavior:
   \begin{equation}\label{k11}
   \begin{aligned}
      \lambda_{k}^\pm=\frac{\pi}{a}\left(\lvert k\rvert-\frac{1}{2}\right){\rm sgn} k+\frac{i P_0^\pm}{2a}+\frac{P}{k}+\frac{\beta_{k}}{k}, \quad \lvert k \rvert \to \infty,
   \end{aligned}
   \end{equation}
   where $\{\beta_{k}\}_{k=-\infty,k\neq0}^{\infty}\in l_{2}$ and where
   \begin{equation}\label{k10}
   \begin{aligned}
       P_0^\pm=\ln{\left(\frac{|\alpha_{0}\pm\alpha+1\pm\alpha\alpha_{0}|}{|\alpha_{0}\pm\alpha-(1\pm\alpha\alpha_{0})|}\right)},\quad P=\frac{\beta_0}{\pi\left(1-\alpha_{0}^2\right)}+\frac{\beta}{\pi\left(1-\alpha^2\right)}+\frac{K_{1}\left(a,a\right)}{\pi}.
   \end{aligned}
   \end{equation}
   $\left(2\right)$ If $\left(\alpha_{0}-1\right)\left(1-\alpha\right)<0$, then the eigenvalues $\{\lambda_{k}^\pm\}_{k\in\mathbb{Z}_{0}}$, $\mathbb{Z}_{0}=\mathbb{Z}\setminus\{0\}$, of the problem $L\left(q,\alpha_{0},\beta_{0},\pm\alpha,\beta\right)$, respectively, have the following asymptotic behavior:
   \begin{equation}\label{k12}
   \begin{aligned}
        \lambda_{k}^\pm=\frac{\pi}{a}\left(\lvert k\rvert-1\right){\rm sgn}  k+\frac{iP_0^\pm}{2a}+\frac{P}{k}+\frac{\beta_{k}}{k}, \quad \lvert k \rvert \to \infty,
   \end{aligned}
   \end{equation}
   where $P_0^\pm$ and $P$ are given \eqref{k10} and $\{\beta_{k}\}_{k=-\infty,k\neq0}^{\infty}\in l_{2}$.\\
   $\left(3\right)$ If $\left(\alpha_{0}-1\right)\left(1-\alpha\right)=0$, then the problem $L\left(q,\alpha_{0},\beta_{0},\pm\alpha,\beta\right)$ may have only finitely many eigenvalues. If there exist infinitely many eigenvalues $\{\lambda_{k}^+\}$, then $Im \lambda_{k}^+ \to \infty$ as $\lvert k\rvert\to \infty$.
\end{theorem}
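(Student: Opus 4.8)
The plan is to read off the leading part of the characteristic function from \eqref{r2}--\eqref{k8}, to locate the zeros of that leading part explicitly, and then to run a Rouch\'e argument followed by a fixed-point refinement. It is enough to treat $\Delta_+$, because $\Delta_-$ is the characteristic function of $L(q,\alpha_0,\beta_0,-\alpha,\beta)$, so every asymptotic formula for $\lambda_k^+$ transfers to $\lambda_k^-$ under $\alpha\mapsto-\alpha$; one checks directly that $P$ in \eqref{k10} is unchanged by this substitution while $P_0^+$ turns into $P_0^-$.

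First I would substitute \eqref{r2} and \eqref{k8} into \eqref{k9}, collect the terms by powers of $\lambda$, and re-expand the trigonometric factors in $e^{\pm i\lambda a}$, obtaining $\Delta_+(\lambda)=\Delta_+^{(0)}(\lambda)+\Delta_+^{(1)}(\lambda)+\widetilde\Psi(\lambda)$, where
\[
\Delta_+^{(0)}(\lambda)=\frac{i\lambda}{2}\Bigl[(1+\alpha)(1+\alpha_0)\,e^{i\lambda a}+(\alpha_0-1)(1-\alpha)\,e^{-i\lambda a}\Bigr]
\]
is the $O(\lambda)$-part, $\Delta_+^{(1)}$ is an $O(1)$ trigonometric term whose coefficients are explicit in $\beta_0,\beta,\alpha,\alpha_0,K_1(a,a)$ (using $G(a,a,\beta_0)=\beta_0+K_1(a,a)$ from \eqref{mza}), and $\widetilde\Psi$ collects $\Psi_2(\lambda,a)$, $i\alpha\Psi_1(\lambda,a)$ and the $O(\lambda^{-1})$ terms. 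From \eqref{hx} with $L^2$ kernels I would record the estimates used below: $|\Psi_j(\lambda,a)|\le Ce^{a|{\rm Im}\,\lambda|}$ for all $\lambda$; on any horizontal line $\Psi_j(\lambda,a)\to0$ as $|{\rm Re}\,\lambda|\to\infty$; and, sampled along such a line at step $\pi/a$ in ${\rm Re}\,\lambda$, the sequence $\{\Psi_j\}$ lies in $l_2$ (Bessel's inequality for the corresponding exponential system in $L^2(0,a)$). The zeros of $\Delta_+^{(0)}$ solve $e^{2i\lambda a}=-(\alpha_0-1)(1-\alpha)/[(1+\alpha)(1+\alpha_0)]$, whose right-hand side is a nonzero real number of modulus $<1$ (since $\alpha>0$ forces $|(\alpha_0-1)(1-\alpha)|<(1+\alpha)(1+\alpha_0)$), with sign $-{\rm sgn}\bigl((\alpha_0-1)(1-\alpha)\bigr)$: it is negative in case $(1)$, positive in case $(2)$, and in case $(3)$ one of the two exponential coefficients vanishes. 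In cases $(1)$ and $(2)$ the zeros are, respectively, $\lambda_{m,0}^+=\frac{\pi(2m+1)}{2a}+\frac{iP_0^+}{2a}$ and $\lambda_{m,0}^+=\frac{\pi m}{a}+\frac{iP_0^+}{2a}$, $m\in\mathbb{Z}$, with $P_0^+=\ln\frac{(1+\alpha)(1+\alpha_0)}{|(\alpha_0-1)(1-\alpha)|}$; the factorizations $(1+\alpha)(1+\alpha_0)=\alpha_0+\alpha+1+\alpha\alpha_0$ and $(\alpha_0-1)(1-\alpha)=\alpha_0+\alpha-(1+\alpha\alpha_0)$ identify $P_0^+$ with \eqref{k10}.

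For parts $(1)$ and $(2)$ I would apply Rouch\'e on circles $\{|\lambda|=R_n\}$ with $R_n$ chosen to avoid a fixed neighbourhood of $\{\lambda_{m,0}^+\}$ and on fixed small disks about each $\lambda_{m,0}^+$: on all of these $|\Delta_+^{(0)}(\lambda)|\ge c|\lambda|e^{a|{\rm Im}\,\lambda|}$, whereas $\Delta_+^{(1)}(\lambda)+\widetilde\Psi(\lambda)=O(e^{a|{\rm Im}\,\lambda|})$, so the perturbation is dominated for $n$ and $|m|$ large. Hence, away from finitely many ``central'' indices, each small disk carries exactly one simple zero $\lambda_m^+=\lambda_{m,0}^++\varepsilon_m$ with $\varepsilon_m\to0$. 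Inserting this into $\Delta_+(\lambda_m^+)=0$, Taylor-expanding $e^{\pm i\lambda a}$ about $\lambda_{m,0}^+$ and using $\Delta_+^{(0)}(\lambda_{m,0}^+)=0$ together with $(\Delta_+^{(0)})'(\lambda_{m,0}^+)\asymp m$, one turns the equation into a contraction for $\varepsilon_m$. The $O(1)$ term $\Delta_+^{(1)}(\lambda_{m,0}^+)$ then produces the summand $P/m$: indeed $\cos(\lambda_{m,0}^+a)$ and $\sin(\lambda_{m,0}^+a)$ reduce to $\pm(-1)^m\cosh\frac{P_0^+}{2}$ and $\pm(-1)^m i\sinh\frac{P_0^+}{2}$, while $\{2\sqrt{|AB|}\cosh\frac{P_0^+}{2},\,2\sqrt{|AB|}\sinh\frac{P_0^+}{2}\}=\{A+B,\,A-B\}=\{2(\alpha+\alpha_0),\,2(1+\alpha\alpha_0)\}$ with $A=(1+\alpha)(1+\alpha_0)$, $B=(\alpha_0-1)(1-\alpha)$, $AB=-(1-\alpha^2)(1-\alpha_0^2)$; the cancellations $(\alpha+\alpha_0)^2-(1+\alpha\alpha_0)^2=AB$, $\ \alpha(\alpha+\alpha_0)-(1+\alpha\alpha_0)=-(1-\alpha^2)$, $\ \alpha_0(\alpha+\alpha_0)-(1+\alpha\alpha_0)=-(1-\alpha_0^2)$ then deliver exactly the three summands of $P$ in \eqref{k10}, the weights $(1-\alpha_0^2)^{-1}$ and $(1-\alpha^2)^{-1}$ arising from the division by $AB$. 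The $\widetilde\Psi(\lambda_{m,0}^+)$ contribution is $\beta_m/m$ with $\{\beta_m\}\in l_2$ by the Bessel property above. Re-indexing $m\leftrightarrow k$ as in the statement (a shift by $1$ in case $(2)$, whence $k\in\mathbb{Z}_0$) yields \eqref{k11} and \eqref{k12}; the versions for $\lambda_k^-$ then follow from the first paragraph.

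For part $(3)$, exactly one exponential coefficient of $\Delta_+^{(0)}$ vanishes; say $\alpha=1$, so $\Delta_+^{(0)}(\lambda)=i\lambda(1+\alpha_0)e^{i\lambda a}$ (the case $\alpha_0=1$ leaves the $e^{-i\lambda a}$ term instead), which has no zero but $\lambda=0$. On any strip $\{{\rm Im}\,\lambda\le C\}$ one has $|\Delta_+^{(0)}(\lambda)|=|\lambda|(1+\alpha_0)e^{-a{\rm Im}\,\lambda}$, which is $\ge c(C)|\lambda|e^{a|{\rm Im}\,\lambda|}$ for ${\rm Im}\,\lambda\le0$ and $\ge c(C)|\lambda|$ for $0\le{\rm Im}\,\lambda\le C$, while $\Delta_+^{(1)}(\lambda)+\widetilde\Psi(\lambda)=O(e^{aC})$ on the strip; hence $\Delta_+$ has only finitely many zeros with ${\rm Im}\,\lambda\le C$, for every $C$. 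Consequently, if the spectrum is infinite then ${\rm Im}\,\lambda_k^+\to\infty$, and it may be finite — for $q\equiv0$, $\beta_0=\beta=0$, $\alpha=1$ one gets $\Delta_+(\lambda)=i\lambda(1+\alpha_0)e^{i\lambda a}$, with the single zero $\lambda=0$. I expect the main obstacle to be the refinement step: threading the Rouch\'e contours between consecutive zeros on the line ${\rm Im}\,\lambda=P_0^\pm/(2a)$, where the two exponentials are of comparable size, and then tracking the $O(1)$ trigonometric coefficients through the division by $(\Delta_+^{(0)})'(\lambda_{m,0}^+)$ so that the constant $P$ emerges exactly in the stated form.
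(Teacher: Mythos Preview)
Your approach is correct and is essentially the paper's own proof: the paper packages the Rouch\'e-plus-refinement step as an appendix Lemma~A.2 (for functions of the form $\lambda(\sigma_1\cos\lambda a+i\sigma_2\sin\lambda a)+M\sin\lambda a-iN\cos\lambda a+\psi(\lambda)$, following M\"oller--Pivovarchik), then simply reads off $\sigma_1=\alpha_0\pm\alpha$, $\sigma_2=1\pm\alpha\alpha_0$ and the constants $M^\pm,N^\pm$ from the expansion \eqref{c16} of $\Delta_\pm$ and simplifies $P=\dfrac{\sigma_2 N-\sigma_1 M}{\pi(\sigma_2^2-\sigma_1^2)}$; your inline calculation with $A=(1+\alpha)(1+\alpha_0)$, $B=(\alpha_0-1)(1-\alpha)$ is exactly that simplification.

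One harmless slip: in part~(3) your parenthetical ``the case $\alpha_0=1$ leaves the $e^{-i\lambda a}$ term instead'' is wrong --- in both sub-cases it is the coefficient $(\alpha_0-1)(1-\alpha)$ of $e^{-i\lambda a}$ that vanishes, and the $e^{i\lambda a}$ term survives, so the argument you actually run (with $e^{i\lambda a}$) already covers $\alpha_0=1$ as well.
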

\begin{proof}
    Using \eqref{r2}-\eqref{k8} we obtain \begin{align}\label{c16}
        \Delta_\pm\left(\lambda\right)=&\lambda[i\left(\alpha_{0}\pm\alpha\right)\cos{\lambda a}-\left(1\pm\alpha\alpha_{0}\right)\sin{\lambda a}]+i[\alpha_{0}\beta\pm\alpha \omega+\alpha_{0} K_{1}\left(a,a\right)]\sin{\lambda a}\\
        \notag &+[\omega+\beta\pm\alpha_{0}\alpha K_{1}\left(a,a\right)]\cos{\lambda a}+\psi_\pm\left(\lambda\right).
    \end{align}
     where $\omega=G(a,a,\beta_{0})$ and
    \begin{equation}\label{o80}
        \psi_\pm\left(\lambda\right)=\omega\beta\frac{\sin{\lambda a}}{\lambda}-i\beta\alpha_{0}K_{1}(a,a)\frac{\cos{\lambda a}}{\lambda}+(\beta \pm i\alpha\lambda)\frac{\Psi_{1}\left(\lambda,a\right)}{\lambda}+\Psi_{2}\left(\lambda,a\right).
    \end{equation}
It is obvious  that the function $\psi_\pm(\lambda)$ are entire functions of exponential type $\le a$ and belong to $L^2(-\infty,\infty)$.
Rewrite \eqref{c16} as
    \begin{align}\label{k13}
     \notag   -i\Delta_\pm\left(\lambda\right)=&\lambda[\left(\alpha_{0}\pm\alpha\right)\cos{\lambda a}+i\left(1\pm\alpha\alpha_{0}\right)\sin{\lambda a}]+[\alpha_{0}\beta\pm\alpha \omega+\alpha_{0} K_{1}\left(a,a\right)]\sin{\lambda a}\\
       &-i[\omega+\beta\pm\alpha_{0}\alpha K_{1}\left(a,a\right)]\cos{\lambda a}-i\psi_\pm\left(\lambda\right).
    \end{align}
The right hand side of \eqref{k13} is of the form \eqref{A.7} in Lemma \ref{LA2} with $M^\pm=\alpha_{0}\beta\pm\alpha \omega+\alpha_{0} K_{1}\left(a,a\right)$ and $N^\pm=\omega+\beta\pm\alpha_{0}\alpha K_{1}\left(a,a\right)$, and the number $P$ defined in \eqref{A.9} becomes
    \begin{equation}
        P^\pm=\frac{\left(1\pm\alpha_{0}\alpha\right)N^\pm-\left(\alpha_{0}\pm\alpha\right)M^\pm}{\pi[\left(1\pm\alpha_{0}\alpha\right)^2-\left(\alpha_{0}\pm\alpha\right)^2]}
        =\frac{(1-\alpha^2)\omega+(1-\alpha_0^2)\beta+(\alpha^2-1)\alpha_0^2K_1(a,a)}{\pi(1-\alpha_0^2)(1-\alpha^2)}.
    \end{equation}
  Using Lemma \ref{LA2} and noting $\omega=\beta_0+K_1(a,a)$, we obtain \eqref{k11}-\eqref{k12}.

  When $\left(\alpha_{0}-1\right)\left(1-\alpha\right)=0$, we have $\alpha_0=1$ or $\alpha=1$. In the case $\alpha_{0}=1$, we have
    \begin{equation}\label{o26}
        \Delta_\pm(\lambda)\!=\!\left[\!(1\pm\alpha)\!\left(i\lambda+\frac{\omega}{2}+\frac{K_{1}(a,a)}{2}\!\right)\!+\beta\!\right]e^{i\lambda a}+\frac{1\mp\alpha}{e^{i\lambda a}}\!\left[\frac{\omega}{2}-\frac{K_{1}(a,a)}{2}\right]\!+\psi_\pm(\lambda).
    \end{equation}
    When $q=0$ and $\beta_{0}=0$ in \eqref{o26}, we have
    \begin{equation}
        \Delta_\pm(\lambda)=[i\lambda(1\pm\alpha)+\beta]e^{i\lambda a}.
    \end{equation}
    It implies that the unique zero of $\Delta_+(\lambda)$ is $\frac{i\beta}{1+\alpha}$ and $\Delta_-(\lambda)$ also has the only zero $\frac{i\beta}{1-\alpha}$ if $\alpha\ne1$. If $\Delta_+(\lambda)$ has infinitely many eigenvalues $\{\lambda_{k}^+\}$, then $\lvert\lambda_{k}^+\rvert\to\infty$ as $\lvert k\rvert\to\infty.$  Substituting $\lambda=\lambda_{k}^+$ into \eqref{o26}, respectively, and noting that $\psi(\lambda_k^+)=o(e^{|{\rm Im}\lambda_k^+|a})$, then we have
    \begin{equation}\label{o27}
        (1+\alpha)\!\!\left[i\lambda_{k}^++\frac{\omega}{2}+\frac{K_{1}(a,a)}{2}\right]+\beta\!=\!\frac{1-\alpha}{e^{2i\lambda_{k}^+ a}}
        \!\left[\!\frac{K_{1}(a,a)}{2}-\frac{\omega}{2}\!\right]\!\!+o(1)e^{\left(\lvert {\rm Im}\lambda_{k}^+\rvert+{\rm Im}\lambda_{k}^+\right)a}.
    \end{equation}
    If $ {\rm Im}\lambda_{k}^+$ is bounded or goes to $-\infty$ as $\lvert k\rvert\to\infty$, then the left hand side of \eqref{o27} is unbounded and the right hand is bounded, which is a contradiction. It shows that $\lim_{k\to\infty}\sup {\rm Im}\lambda_{k}^+=\infty.$ The left hand side of \eqref{o27} tending to $\infty$ implies for the right hand side that $2{\rm Im}\lambda_{k}^+\to\infty$ or $\lvert {\rm Im}\lambda_{k}^+\rvert+{\rm Im}\lambda_{k}^+\to\infty$ as $k\to\infty$, which gives ${\rm Im}\lambda_{k}^+\to\infty$. Similarly, we can also obtain the same results in the case $\alpha=1$.
The proof is complete.
\end{proof}
\begin{theorem}\label{th2x}
   (1) There are at most a finite number of the eigenvalues of  $L\left(q,\alpha_{0},\beta_{0},\alpha,\beta\right)$ lying in the closed lower half-plane. If $(\alpha_0-1)(1-\alpha)\ne0$ and $(\alpha_0+1)|\alpha-1|>(\alpha+1)|\alpha_0-1|$ ($(\alpha_0+1)|\alpha-1|<(\alpha+1)|\alpha_0-1|$), then there are at most a finite number of the eigenvalues of  $L\left(q,\alpha_{0},\beta_{0},-\alpha,\beta\right)$ lying in the closed lower (upper) half-plane. \\
    (2) If $q(x)$ is a real-valued function and $\beta,\beta_{0}\in\mathbb{R}$, then the eigenvalues of the problems $L\left(q,\alpha_{0},\beta_{0},\pm\alpha,\beta\right)$ have the following properties:\\
    (i) If $0$ is an eigenvalue of the problem $L\left(q,\alpha_{0},\beta_{0},\alpha,\beta\right)$, then it is simple.\\
    (ii) If $\lambda$ is an eigenvalue of the problem $L\left(q,\alpha_{0},\beta_{0},\pm\alpha,\beta\right)$, then $-\overline{\lambda}$ is also an eigenvalue, where $\overline{\lambda}$ is the complex conjugate of $\lambda$.\\
    (iii)  All nonzero eigenvalues of the problem $L\left(q,\alpha_{0},\beta_{0},\alpha,\beta\right)$ in the lower half-plane lie on the negative imaginary semiaxis $i\mathbb{R}_-$ and are simple.\\
    (iv) Assume that there are  $\kappa$ eigenvalues of $L\left(q,\alpha_{0},\beta_{0},\alpha,\beta\right)$ on $i\mathbb{R}_-$, denoted by $\lambda_{-j}=-i\lvert \lambda_{-j} \rvert$,$j=1,\ldots,\kappa$, satisfying $\lvert \lambda_{-j} \rvert<\lvert \lambda_{-(j+1)}\rvert$. Then $$i\dot{\Delta}_+\left(-i\lvert\lambda_{-j}\rvert\right)(-1)^{\kappa-j}<0,\quad
    \Delta_{0}\left(-i\lvert\lambda_{-j}\rvert\right)(-1)^{\kappa-j}>0,\quad j=1,\ldots,\kappa,$$ where $\dot{\Delta}_+(\lambda)=\frac{d {\Delta_+}(\lambda)}{d\lambda}$. Moreover, the zeros of $\Delta_{0}(\lambda)$ and $\Delta_+(\lambda)$ on $i\mathbb{R}_-$  interlace each other.
\end{theorem}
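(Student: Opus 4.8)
The plan is to obtain the four assertions, in order, from the eigenvalue asymptotics of Theorem~\ref{th1}, from a conjugation symmetry of $y(\lambda,x)$, from a Green's-type identity combined with a Wronskian formula for $\dot\Delta_+$, and from a sign analysis of $\Delta_+$ and $\Delta_0$ along $i\mathbb{R}_-$.

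For part (1), I would first factor the quantities in \eqref{k10}: since $\alpha_0\pm\alpha+1\pm\alpha\alpha_0=(1\pm\alpha_0)(1\pm\alpha)$, $\alpha_0+\alpha-(1+\alpha\alpha_0)=(1-\alpha)(\alpha_0-1)$ and $\alpha_0-\alpha-(1-\alpha\alpha_0)=(1+\alpha)(\alpha_0-1)$, when $(\alpha_0-1)(1-\alpha)\ne0$ one has
\[
P_0^+=\ln\frac{(1+\alpha_0)(1+\alpha)}{|1-\alpha_0|\,|1-\alpha|},\qquad P_0^-=\ln\frac{(1+\alpha_0)\,|1-\alpha|}{(1+\alpha)\,|\alpha_0-1|}.
\]
Because $\alpha>0$ forces $(1+\alpha_0)(1+\alpha)>|1-\alpha_0|\,|1-\alpha|$, we get $P_0^+>0$, so by Theorem~\ref{th1}(1)--(2) all but finitely many eigenvalues of $L(q,\alpha_0,\beta_0,\alpha,\beta)$ satisfy ${\rm Im}\,\lambda_k^+\to P_0^+/(2a)>0$; and when $(\alpha_0-1)(1-\alpha)=0$ the same follows from Theorem~\ref{th1}(3). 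Hence only finitely many lie in $\{{\rm Im}\,\lambda\le0\}$. For $L(q,\alpha_0,\beta_0,-\alpha,\beta)$ with $(\alpha_0-1)(1-\alpha)\ne0$, the sign of $P_0^-$ agrees with that of $(\alpha_0+1)|\alpha-1|-(\alpha+1)|\alpha_0-1|$, and Theorem~\ref{th1}(1)--(2) then places all but finitely many $\lambda_k^-$ in the open upper half-plane in the first case and in the open lower half-plane in the second.

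For parts (2)(i)--(iii), with $q$ real and $\beta_0,\beta\in\mathbb{R}$, I would note that $s,c$ in \eqref{k4}--\eqref{k5} are even in $\lambda$ with real initial data, so $\overline{s(\lambda,x)}=s(\overline\lambda,x)$, $\overline{c(\lambda,x)}=c(\overline\lambda,x)$, whence by \eqref{hx4} $\overline{y(\lambda,x)}=y(-\overline\lambda,x)$ and therefore $\overline{\Delta_\pm(\lambda)}=\Delta_\pm(-\overline\lambda)$; this is exactly (2)(ii), and it also shows $\Delta_\pm$ is real on $i\mathbb{R}$ and $y(\lambda,\cdot)$ is real when $\lambda\in i\mathbb{R}$. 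For (2)(iii), if $\lambda_0\ne0$ is an eigenvalue of $L(q,\alpha_0,\beta_0,\alpha,\beta)$ with ${\rm Im}\,\lambda_0\le0$ and $\psi=y(\lambda_0,\cdot)$, then multiplying $\ell\psi=\lambda_0^2\psi$ by $\overline\psi$, integrating over $(0,a)$, and inserting \eqref{k2}--\eqref{k3} gives
\[
(i\alpha\lambda_0+\beta)|\psi(a)|^2+(i\alpha_0\lambda_0+\beta_0)|\psi(0)|^2+\int_0^a\bigl(|\psi'|^2+q|\psi|^2\bigr)\,dx=\lambda_0^2\int_0^a|\psi|^2\,dx ,
\]
whose imaginary part reads ${\rm Re}\,\lambda_0\bigl(\alpha|\psi(a)|^2+\alpha_0|\psi(0)|^2-2\,{\rm Im}\,\lambda_0\int_0^a|\psi|^2\,dx\bigr)=0$; since $\psi(0)=1$ and $\psi(a)=\Delta_0(\lambda_0)\ne0$ (otherwise $\psi\equiv0$), the bracket is strictly positive, so ${\rm Re}\,\lambda_0=0$ and ${\rm Im}\,\lambda_0<0$. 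For the simplicity claims (and the case $\lambda_0=0$ of (2)(i)), I would differentiate \eqref{e10} in $\lambda$ — using $\dot y(\lambda,0)=\dot\phi(\lambda,a)=0$, $\dot y'(\lambda,0)=i\alpha_0$, $\dot\phi'(\lambda,a)=-i\alpha$, the relation $\frac{d}{dx}\langle\phi,\dot y\rangle=-2\lambda\phi y$, and $\phi(\lambda_0,x)=y(\lambda_0,x)/\Delta_0(\lambda_0)$ (valid at an eigenvalue) — to obtain
\[
\Delta_0(\lambda_0)\,\dot\Delta_+(\lambda_0)=i\alpha\,\Delta_0(\lambda_0)^2+i\alpha_0-2\lambda_0\int_0^a y(\lambda_0,x)^2\,dx .
\]
For $\lambda_0\in i\mathbb{R}_-\cup\{0\}$ the right-hand side is $i$ times a strictly positive real, so $\dot\Delta_+(\lambda_0)\ne0$.

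For part (2)(iv), I would use that $t\mapsto\Delta_+(-it)$ is real on $t>0$ with zeros exactly at the (simple) points $t=|\lambda_{-j}|$, and that, by \eqref{c16} (whose correction term $\psi_+$ satisfies $\psi_+(-it)=o(e^{at})$ since it is entire of exponential type $\le a$ and in $L^2(\mathbb{R})$), its leading behaviour as $t\to+\infty$ is $\frac{t}{2}(1+\alpha_0)(1+\alpha)e^{at}>0$; consequently the sign of $\Delta_+(-it)$ on $(|\lambda_{-j}|,|\lambda_{-(j+1)}|)$ (with $|\lambda_{-(\kappa+1)}|:=+\infty$) is $(-1)^{\kappa-j}$, so $\frac{d}{dt}\Delta_+(-it)\big|_{t=|\lambda_{-j}|}=-i\dot\Delta_+(\lambda_{-j})$ has sign $(-1)^{\kappa-j}$, i.e.\ $i\dot\Delta_+(\lambda_{-j})(-1)^{\kappa-j}<0$. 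Putting $\lambda_0=\lambda_{-j}$ in the identity above, with $R_j:=\alpha\Delta_0(\lambda_{-j})^2+\alpha_0+2|\lambda_{-j}|\int_0^a y(\lambda_{-j},x)^2\,dx>0$, gives $\Delta_0(\lambda_{-j})\dot\Delta_+(\lambda_{-j})=iR_j$, hence $-i\dot\Delta_+(\lambda_{-j})=R_j/\Delta_0(\lambda_{-j})$ and, comparing signs, $\Delta_0(\lambda_{-j})(-1)^{\kappa-j}>0$. Finally, $\Delta_0(-it)$ is real and changes sign along the $|\lambda_{-j}|$, hence vanishes between consecutive ones; a parallel computation — differentiating $\Delta_0(\lambda)=y(\lambda,a)$ and using $\frac{d}{dx}\langle\dot y,y\rangle=2\lambda y^2$, $\dot y(\lambda,0)=0$, $\dot y'(\lambda,0)=i\alpha_0$, which gives $\dot\Delta_0(\mu)\,\Delta_+(\mu)=2\mu\int_0^a y(\mu,x)^2\,dx-i\alpha_0$ at a zero $\mu$ of $\Delta_0$ — shows $\Delta_+$ changes sign at consecutive zeros of $\Delta_0$ on $i\mathbb{R}_-$; since all these zeros are simple this yields the stated interlacing.

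The hardest part will be the $\lambda$-differentiation of \eqref{e10} producing the formula for $\Delta_0\dot\Delta_+$ (and its $\Delta_0$-analogue), from which all the sign information in (2)(i), (iii), (iv) is read off: the endpoint bookkeeping is delicate because both boundary conditions carry $\lambda$, entering through $\dot\phi$ and $\dot y$ at $0$ and $a$, and the degenerate case $\alpha_0=0$ (where $y$ depends on $\lambda$ only through $\lambda^2$) must be checked separately in the Green's-identity step. The other subtle point is turning the sign alternations of the last paragraph into a genuine interlacing — ruling out two zeros of $\Delta_0$ (resp.\ $\Delta_+$) between consecutive zeros of the other — which relies on simplicity of all the zeros together with the mutual sign-change property.
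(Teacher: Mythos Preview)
Your proposal is correct and largely parallels the paper's proof: part~(1) via the factorisations of $P_0^\pm$ and Theorem~\ref{th1}, part~(2)(ii) via the conjugation symmetry of $c,s$, part~(2)(iii) via the Green's identity giving ${\rm Re}\,\lambda_0=0$, and part~(2)(iv) via the large-$\tau$ behaviour of $\Delta_+(-i\tau)$ combined with the Wronskian-type identity.

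The one substantive methodological difference is how the key identity is obtained and then exploited for the interlacing. You reach the formula $\Delta_0(\lambda_0)\dot\Delta_+(\lambda_0)=i\alpha\Delta_0(\lambda_0)^2+i\alpha_0-2\lambda_0\int_0^a y^2$ by differentiating $\Delta_+=\langle\phi,y\rangle$ and using $\phi(\lambda_0,\cdot)=y(\lambda_0,\cdot)/\Delta_0(\lambda_0)$, which is valid only \emph{at} eigenvalues. The paper instead derives, by integrating $[y'\dot y-\dot y' y]'=2\lambda y^2$, the identity
\[
2\lambda\int_0^a y^2\,dx=\Delta_+(\lambda)\dot\Delta_0(\lambda)-\dot\Delta_+(\lambda)\Delta_0(\lambda)+i\alpha\Delta_0^2(\lambda)+i\alpha_0
\]
valid for \emph{all} $\lambda$. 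This global identity makes the endpoint bookkeeping you flag as ``delicate'' entirely painless (only $y$ and $\dot y$ at $0$ and $a$ appear, never $\phi$), and it also dispenses with your separate $\Delta_0$-analogue, since specialising to $\Delta_0(\mu)=0$ immediately gives your second formula. More importantly, from the all-$\lambda$ identity one reads off directly
\[
\frac{d}{d\tau}\!\left(\frac{\Delta_0(-i\tau)}{\Delta_+(-i\tau)}\right)=\frac{1}{\Delta_+^2(-i\tau)}\left[2\tau\!\int_0^a y^2(-i\tau,x)\,dx+\alpha\Delta_0^2(-i\tau)+\alpha_0\right]>0,
\]
so $\Delta_0/\Delta_+$ is strictly increasing on each interval between consecutive poles $|\lambda_{-j}|$, going from $-\infty$ to $+\infty$; this yields the interlacing in one stroke and removes the ``ruling out two zeros'' subtlety you worried about. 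Your two-sided sign-change argument is also correct (and your simplicity of the $\Delta_0$-zeros follows from your own formula $\Delta_+(\mu)\dot\Delta_0(\mu)=2\mu\int y^2-i\alpha_0$, since $\Delta_+$ and $\Delta_0$ cannot vanish together on $i\mathbb{R}_-$), but the monotonicity route is both shorter and avoids the combinatorial check. Finally, your concern about $\alpha_0=0$ is unnecessary: nothing in either identity or in the conjugation symmetry degenerates there.
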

\begin{proof}
    (1) From Theorem \ref{th1}, it is easy to see that there are at most a finite number of the eigenvalues $\{\lambda_n^+\}$ lying in the closed lower half-plane. Assume that $(\alpha_0-1)(1-\alpha)\ne0$, (i) if $(\alpha_0+1)|\alpha-1|>(\alpha+1)|\alpha_0-1|$, then $\frac{|\alpha_{0}-\alpha+1-\alpha\alpha_{0}|}{|\alpha_{0}-\alpha-(1-\alpha\alpha_{0})|}>1$, which implies  $P_0^->0$; (ii) if  $(\alpha_0+1)|\alpha-1|<(\alpha+1)|\alpha_0-1|$, then $\frac{|\alpha_{0}-\alpha+1-\alpha\alpha_{0}|}{|\alpha_{0}-\alpha-(1-\alpha\alpha_{0})|}\in (0,1)$, which implies   $P_0^-<0$.

    (2) (i)
    Due to \eqref{k1}, then the differentiation of \eqref{k1} with respect to $\lambda$ gives
    \begin{equation}\label{e12}
        -\dot{y}''\left(\lambda,x\right)+q\left(x\right)\dot{y}\left(\lambda,x\right)=2\lambda y\left(\lambda,x\right)+\lambda^2\dot{y}\left(\lambda,x\right).
    \end{equation}
   Multiply \eqref{k1} and \eqref{e12} by $\dot{y}$ and $y$, respectively, take the difference, and get
   \begin{equation}\label{e5s}
       2\lambda y^2\left(\lambda,x\right)
       =\left[y'\left(\lambda,x\right)\dot{y}\left(\lambda,x\right)-\dot{y}'\left(\lambda,x\right)y\left(\lambda,x\right)\right]'.
   \end{equation}
   Let us integrate both side of \eqref{e5s}, and using \eqref{k2}, then we have
   \begin{align}
   \notag
       2\lambda\int_{0}^{a}y^2\left(\lambda,x\right)dx&=\int_{0}^{a}\left[y'\left(\lambda,x\right)\dot{y}\left(\lambda,x\right)-\dot{y}'\left(\lambda,x\right)y\left(\lambda,x\right)\right]'dx\\
       &\notag=y'\left(\lambda,a\right)\dot{\Delta}_{0}\left(\lambda\right)-\dot{y}'\left(\lambda,a\right)\Delta_{0}\left(\lambda\right)+i\alpha_{0}\\
       &\label{o1}=\Delta_+\left(\lambda\right)\dot{\Delta}_{0}\left(\lambda\right)-\dot{\Delta}_+\left(\lambda\right)\Delta_{0}\left(\lambda\right)+i\alpha \Delta_{0}^{2}(\lambda)+i\alpha_{0}.
   \end{align}
   Letting $\lambda=0$ in \eqref{o1}, due to $\Delta_+(0)=0$, we have
   \begin{equation}
       \dot{\Delta}_+\left(0\right)\Delta_{0}\left(0\right)=i\alpha \Delta_{0}^{2}(0)+i\alpha_{0},
   \end{equation}
   which implies
   \begin{equation}
       -i\dot{\Delta}_+\left(0\right)\Delta_{0}\left(0\right)=\alpha \Delta_{0}^{2}(0)+\alpha_{0}>0.
   \end{equation}
   Hence, if $0$ is an eigenvalue, then it is simple.

  (ii)    Note that
    \begin{equation*}
        \overline{ c^{(\nu)}\left(\lambda,x\right)}=c^{(\nu)}\left(-\bar{\lambda},x\right), \quad  \overline{ s^{(\nu)}\left(\lambda,x\right)}=s^{(\nu)}\left(-\bar{\lambda},x\right),\quad \nu=0,1.
    \end{equation*}
 It follows from \eqref{hx4}  that
        \begin{align}\label{k16}
            y^{(\nu)}\left(-\overline{\lambda},x\right)= \overline{y^{(\nu)}\left(\lambda,x\right)},\quad \nu=0,1,
        \end{align}
which implies from \eqref{k9} and \eqref{e15} that
    \begin{equation}\label{m1s}
            \overline{\Delta_\pm \left(\lambda\right)}=\Delta_\pm\left(-\overline{\lambda}\right).
    \end{equation}
The assertion in (i) follows from \eqref{m1s}.

 (iii)   Assume that there is an eigenvalue $\lambda_{0}=\sigma-i\tau\left(\tau>0\right)$, i.e.,  $\Delta_+\left(\lambda_{0}\right)=0$. Using the initial condition of $y(\lambda_0,x)$ and integration by the parts, we  calculate
    \begin{align}\label{e1}
        \notag &\lambda_{0}^2\int_{0}^{a}\lvert y\left(\lambda_{0},x\right)\rvert^2dx=
             \int_{0}^{a}\ell y(\lambda_0,x)y\left(-\overline{\lambda}_{0},x\right)dx\\
           \notag= &i\alpha\left(\lambda_{0}+\overline{\lambda}_{0}\right)\lvert y\left(\lambda_{0},a\right)\rvert^2+i\alpha_0\left(\lambda_{0}+\overline{\lambda}_{0}\right)+ \int_{0}^{a} y(\lambda_0,x)\ell y\left(-\overline{\lambda}_{0},x\right)dx\\
           =& 2i\sigma\left[\alpha\lvert y(\lambda_{0},a)\rvert^2+\alpha_{0}\right]+
          \notag \overline{\lambda}_{0}^2\int_{0}^{a}\lvert y\left(\lambda_{0},x\right)\rvert^2dx,
    \end{align}
which implies
    \begin{equation}\label{e3}
       - 2\sigma\tau\int_{0}^{a}\lvert y\left(\lambda_{0},x\right)\rvert^2dx=\alpha\sigma\lvert y(\lambda_{0},a)\rvert^2+\sigma\alpha_{0}.
    \end{equation}
    By observing \eqref{e3}, we have that $\tau<0$ if $\sigma\neq0$, which contradicts the fact that $\tau>0$. Thus $\sigma=0$.

    Next, we verify that the eigenvalues $\lambda_{0}=-i\tau \left(\tau>0\right)$ is simple. Letting $\lambda=\lambda_{0}=-i\tau$ in \eqref{o1}, due to $\Delta_+\left(-i\tau\right)=0$, we have
   \begin{align}
       \dot{\Delta}_+\left(-i\tau\right)\Delta_{0}\left(-i\tau\right)=2i\tau\int_{0}^{a}y^2\left(-i\tau ,x\right)dx+i\alpha \Delta_{0}^{2}(-i\tau)+i\alpha_{0},
   \end{align}
which implies
\begin{equation}\label{o2}
    -i\dot{\Delta}_+\left(-i\tau\right)\Delta_{0}\left(-i\tau\right)>0.
\end{equation}
Therefore, if $\lambda_{0}$ is a zero of $\Delta_+(\lambda)$ on the negative imaginary semiaxis, then it is simple.

 (iv) Note that
   \begin{equation}\label{m1}
       \cos({-i\tau a})=\frac{e^{-\tau a}+e^{\tau a}}{2},  \sin({-i\tau a})=\frac{-e^{-\tau a}+e^{\tau a}}{2i}.
   \end{equation}
   Substituting \eqref{m1} into \eqref{c16}, then we have
   \begin{align}
       \Delta_+(-i\tau)\notag=&\left[\left(\alpha+\alpha_{0}\right)\frac{e^{-\tau a}+e^{\tau a}}{2}+\left(1+\alpha\alpha_{0}\right)\frac{-e^{-\tau a}+e^{\tau a}}{2}\right]\tau\\&\notag+\left[\omega+\beta+\alpha\alpha_{0}K_{1}\left(a,a\right)\right]\frac{e^{-\tau a}+e^{\tau a}}{2}\\
   \notag    &+\left[\alpha_{0}\beta+\alpha\omega+\alpha_{0}K_{1}\left(a,a\right)\right]\frac{-e^{-\tau a}+e^{\tau    a}}{2}+O\left(\frac{e^{\tau a}}{\tau}\right).
   \end{align}
    Then
    \begin{equation}\label{q9}
        \Delta_+(-i\tau)\to +\infty, \quad \tau\to +\infty.
    \end{equation}
    Note that $\Delta_+(\lambda)$ has only simple zeros on on the negative imaginary semiaxis. It follows from \eqref{q9} that $-i\dot{\Delta}_+\left(-i\lvert\lambda_{-\kappa}\rvert\right)>0$. Consequently, we have
   \begin{equation}
   \notag
       -i\dot{\Delta}_+\left(-i\lvert\lambda_{-j}\rvert\right)(-1)^{\kappa-j}>0.
   \end{equation}
   Together with \eqref{o2}, we have $   \Delta_{0}\left(-i\lvert\lambda_{-j}\rvert\right)(-1)^{\kappa-j}>0.$

   Substituting $\lambda=-i\tau$ into \eqref{o1}, we obtain
\begin{align}
\notag
  &  \Delta_+\left(-i\tau \right)\frac{\mathrm{d}}{\mathrm{d}(-i\tau)}\Delta_{0}\left(-i\tau\right)-\Delta_{0}\left(-i\tau\right)\frac{\mathrm{d}}{\mathrm{d} (-i\tau)}\Delta_+\left(-i\tau\right)\\&=-2i\tau\int_{0}^{a}\lvert y\left( -i\tau,x\right)\rvert^2dx-i\alpha \Delta_{0}^{2}(-i\tau)-i\alpha_{0}.
\end{align}
Consequently,
\begin{align}
\notag
    \frac{\mathrm{d}}{\mathrm{d}\tau }\left(\frac{\Delta_{0}\left(-i\tau\right)}{\Delta_+\left(-i\tau\right)}\right)&=\frac{1}{\Delta_+^2\left(-i\tau\right)}\left[2\tau\int_{0}^{a}\lvert y\left(-i\tau,x\right)\rvert^2dx+\alpha\Delta_{0}^{2}(-i\tau)+\alpha_{0}\right]>0,
\end{align}
thus the function $\frac{\Delta_{0}\left(-i\tau\right)}{\Delta_+\left(-i\tau\right)}$ monotonically increasing for $\tau\in\mathbb{R}^{+}\verb|\|\left\{ \lvert \lambda_{-j} \rvert|j=1,\ldots,\kappa\right\}$
with
\begin{align*}
   &\lim_{\tau\to \lvert \lambda_{-j} \rvert^\pm}\frac{\Delta_{0}\left(-i\tau\right)}{\Delta_+\left(-i\tau\right)}=\mp\infty,\quad j=2,\dots,\kappa-1\\
    &\label{e5}\lim_{\tau\to \lvert \lambda_{-1} \rvert^{+}}\frac{\Delta_{0}\left(-i\tau\right)}{\Delta_+\left(-i\tau\right)}=-\infty,\quad
    \notag\lim_{\tau\to \lvert \lambda_{-\kappa} \rvert^{-}}\frac{\Delta_{0}\left(-i\tau\right)}{\Delta_+\left(-i\tau\right)}=+\infty.
\end{align*}
Hence, $\Delta_{0}\left(-i\lvert \lambda_{-j} \rvert\right)\Delta_{0}\left(-i\lvert \lambda_{-(j+1)} \rvert\right)<0$. It follows that the function $\Delta_{0}(\lambda)$ has one zero in the interval $\left(-i\lvert \lambda_{-(j+1)} \rvert,-i\lvert \lambda_{-j} \rvert\right)$, then we have that the zeros of $\Delta_+(\lambda)$ and $\Delta_{0}(\lambda)$ interlace on the negative imaginary semiaxis.
The proof is complete.
\end{proof}
\section{Inverse problems}
In this section, we study the inverse spectral problems. We will consider the even inverse problem, two-spectra theorem and the general partial inverse problem, and prove four uniqueness theorems.

\begin{lemma}\label{hx11}
Let $f(z)$ be an   entire function of exponential type, and has the asymptotics
\begin{equation}\label{hx10}
 f(z)=z[c_1\cos z +c_2\sin z ]+O(e^{|{\rm Im}z|}),\quad |z|\to \infty,
\end{equation}
where $c_1$ and $c_2$ are constants. If $|c_1|^2+|c_2|^2\ne0$ and there exists one non-zero $c_0\in \{c_1,c_2,c_1+c_2,c_1-c_2\}$ and $c_0$ is known, then $f(z)$ is uniquely determined by all its zeros (including multiplicity).
\end{lemma}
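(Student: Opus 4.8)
The plan is to show that the stated hypotheses pin down $f$ up to a nonzero multiplicative constant, and that knowledge of $c_0$ then removes that constant. Throughout write $c_1\cos z+c_2\sin z=Ae^{iz}+Be^{-iz}$, where $2A=c_1-ic_2$ and $2B=c_1+ic_2$, so that $(A,B)\neq(0,0)$ since $(c_1,c_2)\neq(0,0)$. Two facts will be used repeatedly. First, by \eqref{hx10}, $f$ is entire of exponential type $\le 1$ and $|f(x)|\le C(1+|x|)$ on the real axis. Second, the pair $(c_1,c_2)$ is itself determined by \eqref{hx10}: if also $z[(c_1-c_1')\cos z+(c_2-c_2')\sin z]=O(e^{|{\rm Im}z|})$, then dividing by $z$ and setting $z=\pm it$ with $t\to+\infty$ (using $\cos it=\cosh t$, $\sin it=i\sinh t$) gives $(c_1-c_1')\pm i(c_2-c_2')=0$, hence $c_1=c_1'$ and $c_2=c_2'$.

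Now suppose $f_1$ and $f_2$ both satisfy the hypotheses with the same prescribed value $c_0$ and with exactly the same zeros, counted with multiplicity. Then $\Phi:=f_1/f_2$ is entire and zero-free. The crucial step is a lower bound for the denominator on a sequence of expanding circles: I would show that there are $\varepsilon_0>0$ and radii $r_k\uparrow\infty$ with bounded gaps $r_{k+1}-r_k$ such that
\[
|f_2(z)|\ge\varepsilon_0\,e^{|{\rm Im}z|}\qquad\text{for }|z|=r_k .
\]
When $A\neq0$ and $B\neq0$ this is straightforward: outside a fixed strip $|{\rm Im}z|\le h_0$ one of $Be^{-iz}$ (in the upper half-plane) or $Ae^{iz}$ (in the lower half-plane) dominates, so the leading term of $f_2$ beats the $O(e^{|{\rm Im}z|})$ remainder; inside the strip all but finitely many zeros of $f_2$ lie — by the usual Rouché localization based on \eqref{hx10}, as in the proof of Theorem \ref{th1} — close to the zeros of $z[Ae^{iz}+Be^{-iz}]$, whose real parts form an arithmetic progression of step $\pi$ up to an additive constant, and choosing $r_k$ halfway between consecutive such points keeps $|c_1\cos z+c_2\sin z|$ bounded below on the two short arcs of $\{|z|=r_k\}$ lying in the strip, where also $e^{|{\rm Im}z|}\le e^{h_0}$.

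Granting this bound, \eqref{hx10} for $f_1$ gives $|f_1(z)|\le C_1(1+r_k)e^{|{\rm Im}z|}$ on $|z|=r_k$, hence $|\Phi(z)|\le C_1(1+r_k)/\varepsilon_0$ there; by the maximum modulus principle this holds throughout $\{|z|\le r_k\}$, and the bounded gaps then give $|\Phi(z)|=O(1+|z|)$ as $|z|\to\infty$. An entire function of at most linear growth is a polynomial of degree $\le1$ (Cauchy estimates), and being zero-free it must be a nonzero constant $C_0$; thus $f_1\equiv C_0 f_2$. Comparing the leading terms of $f_1$ and $f_2$ (which are uniquely determined, as noted above) yields $c_1^{(1)}=C_0 c_1^{(2)}$, $c_2^{(1)}=C_0 c_2^{(2)}$, and hence also $c_1^{(1)}\pm c_2^{(1)}=C_0(c_1^{(2)}\pm c_2^{(2)})$. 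Writing $d^{(j)}$ for whichever of $c_1^{(j)},c_2^{(j)},c_1^{(j)}+c_2^{(j)},c_1^{(j)}-c_2^{(j)}$ is the prescribed datum, we have $c_0=d^{(1)}=C_0 d^{(2)}=C_0 c_0$, and since $c_0\neq0$ this forces $C_0=1$; therefore $f_1\equiv f_2$.

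I expect the lower bound for $f_2$ on the circles $\{|z|=r_k\}$ to be the only genuinely delicate point. Off a fixed horizontal strip it is immediate from \eqref{hx10}, but in the degenerate situation — exactly one of $A,B$ zero, which (as in Theorem \ref{th1}(3)) makes the zeros of $f_2$ escape to infinity in the imaginary direction — the simple ``strip plus two short arcs'' decomposition no longer works, and one must choose the radii $r_k$ relative to the slowly rising curve along which the zeros accumulate and use their quantitative spacing to keep $\varepsilon_0$ bounded below independently of $k$.
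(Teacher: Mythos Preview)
Your approach differs genuinely from the paper's, and in the non-degenerate case $AB\ne 0$ it is sound. The paper proceeds much more directly: by Hadamard's factorization, $f(z)=ce^{bz}E(z)$ with $E(z)=z^{s}\prod_{z_n\ne0}(1-z/z_n)e^{z/z_n}$ determined entirely by the zeros; then along the real sequence $z=2n\pi$ (or $(2n+\tfrac12)\pi$, $2n\pi\pm\tfrac{\pi}{4}$, depending on which of $c_1,c_2,c_1\pm c_2$ is the known nonzero datum $c_0$) the asymptotic \eqref{hx10} gives $f(2n\pi)/(2n\pi)\to c_1$, from which first $b$ and then $c$ are read off as explicit limits involving only $E$ and $c_0$. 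No lower bound for $f$ off the real axis is ever needed.

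The gap you flag in the degenerate case is real and is not repaired by adjusting the radii $r_k$. If, say, $B=0$ (i.e.\ $c_1+ic_2=0$), the leading part of $f_2$ is $zAe^{iz}$, and for $z=it$ with $t>0$ one has $|zAe^{iz}|=|A|\,t\,e^{-t}$, which is exponentially smaller than the admissible error $O(e^{t})$; hence no inequality $|f_2(z)|\ge\varepsilon_0e^{|{\rm Im}z|}$ can hold on any arc meeting the upper half-plane, let alone on a full circle. The obstruction is the one-sided exponential decay of the principal part, not the location of the zeros, so choosing $r_k$ relative to the (possibly escaping) zeros does nothing. A concrete witness is $f_2(z)=ze^{iz}$: its sole zero is at $0$, yet $|f_2(it)|=te^{-t}\to0$, so on every circle $|z|=r$ the ratio $|f_1|/|f_2|$ can be exponentially large even for admissible $f_1$. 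To close your argument in this case you would in effect have to invoke Hadamard (entire, zero-free, exponential type $\Rightarrow \Phi=Ce^{bz}$) and then use real-axis values to force $b=0$ --- which is exactly the paper's route. So the circle/maximum-modulus strategy, while pleasant when both $A,B\ne0$, does not give a uniform proof of the lemma as stated.
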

\begin{proof}
Let $\{z_n\}$ (counted with multiplicities) be the non-zero zeros of the function $f(z)$. The Hadamard's factorization theorem implies
\begin{equation}\label{3.0}
f(z)=ce^{bz}E(z),\quad E(z):=z^{s}\prod_{z_n\ne0}\left(1-\frac{z}{z_n}\right)e^{\frac{z}{z_n}},
\end{equation}
where $b,c$ are the constants to be determined, and $s\ge 0$. Without loss of generality, assume $c_1\ne 0$ and is known. Then from (\ref{hx10}) we have
\begin{equation*}
  \frac{f(2n\pi)}{2n\pi}=c_1(1+o(1)),\quad n\to\infty,
\end{equation*}
which implies from (\ref{3.0}) that
\begin{equation}\label{3.0s}
  b=-\lim_{n\to\infty} \frac{\ln E(2n\pi)}{2n\pi},
\quad
  c= c_1\left[\lim_{n\to\infty}\frac{e^{2n\pi b}E(2n \pi)}{2n\pi}\right]^{-1}.
\end{equation}
It follows from (\ref{3.0}) and (\ref{3.0s}) that all zeros of $f(z)$ uniquely determine $f(z)$. The proof is complete.
\end{proof}

In the following theorem, under the assumption that  $0$ is not an eigenvalue, we prove a uniqueness result for the even inverse problem of the problem $L(q,\alpha,\beta,\alpha,\beta)$.
\begin{theorem}
    Assume that $\Delta_+\left(0\right)\neq0$, $q\left(x\right)=q\left(a-x\right)$, $\alpha=\alpha_{0}$ and $\beta=\beta_{0}$. Then  all the eigenvalues $\left\{\lambda_{k}^+\right\}$ (including multiplicity) uniquely determine $\alpha$, $\beta$ and $q(x)$ a.e. on $(0,a)$.
\end{theorem}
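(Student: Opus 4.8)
\noindent The plan is to use the reflection $x\mapsto a-x$ to factor the characteristic function through the midpoint $x=a/2$, reducing the even inverse problem to a single generalized Regge problem on $(0,a/2)$.

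First I would prove the reflection identity $\phi(\lambda,x)=y(\lambda,a-x)$: since $q(x)=q(a-x)$, the function $x\mapsto y(\lambda,a-x)$ solves \eqref{k1}, and from $y(\lambda,0)=1$, $y'(\lambda,0)=\beta_0+i\alpha_0\lambda$ together with $\alpha=\alpha_0$, $\beta=\beta_0$ it takes at $x=a$ the values $1$ and $-(i\alpha\lambda+\beta)$, which are exactly the data defining $\phi$. Evaluating the $x$-independent Wronskian \eqref{e10} at $x=a/2$, where $\phi(\lambda,a/2)=y(\lambda,a/2)$ and $\phi'(\lambda,a/2)=-y'(\lambda,a/2)$, then gives the factorization
\[
 \Delta_+(\lambda)=2\,y\!\left(\lambda,\tfrac a2\right)y'\!\left(\lambda,\tfrac a2\right)=:2A(\lambda)B(\lambda),
\]
where $A(\lambda)=y(\lambda,a/2)$ is the characteristic function of $-u''+qu=\lambda^2u$ on $(0,a/2)$ with the generalized Regge condition \eqref{k2} at $0$ and $u(a/2)=0$, and $B(\lambda)=y'(\lambda,a/2)$ that of the same problem with $u'(a/2)=0$. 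Each of these half-problems carries a single generalized Regge condition.

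Next I would note that $\left<y(\lambda,\cdot),y(-\lambda,\cdot)\right>$ is independent of $x$ and equals $-2i\alpha\lambda$ (evaluate at $x=0$), so $A(\lambda)B(-\lambda)-A(-\lambda)B(\lambda)\equiv-2i\alpha\lambda$; with $\Delta_+(0)\neq0$ (hence $A(0)B(0)\neq0$) this forces $A$ and $B$ to have no common zero, so $\{\lambda_k^+\}$ is the disjoint union, with multiplicities, of the zero sets of $A$ and of $B$. If these two sets can be read off from $\{\lambda_k^+\}$, then the known one-spectrum uniqueness for the single generalized Regge problem on $(0,a/2)$ with $u'(a/2)=0$ \cite{VC,x,X} — whose characteristic function is $B$, reconstructed from its zeros via Lemma \ref{hx11} (the coefficient $-2/a$ of $\sin$ in the leading term of $B$ being known) — recovers $q$ on $(0,a/2)$ together with $\alpha=\alpha_0$, $\beta=\beta_0$, and then $q$ on $(a/2,a)$ follows from $q(x)=q(a-x)$. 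For the splitting I would invoke the asymptotics from \eqref{r2}--\eqref{k8}: $A(\lambda)=\cos\tfrac{\lambda a}{2}+i\alpha\sin\tfrac{\lambda a}{2}+O(\lambda^{-1}e^{|{\rm Im}\lambda|a/2})$ and $B(\lambda)=\lambda\big(i\alpha\cos\tfrac{\lambda a}{2}-\sin\tfrac{\lambda a}{2}\big)+O(e^{|{\rm Im}\lambda|a/2})$, so that (for $\alpha\neq1$) the zeros of $A$ cluster along one of the progressions $\{\tfrac{(2n+1)\pi}{a}+\tfrac{iP_0^+}{2a}\}$, $\{\tfrac{2n\pi}{a}+\tfrac{iP_0^+}{2a}\}$ and those of $B$ along the other, the union being the spacing-$\pi/a$ progression of Theorem \ref{th1}(2) ($a$ itself being recovered from the leading asymptotics of $\{\lambda_k^+\}$). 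Uniqueness I would phrase contrapositively: if even-symmetric $(q,\alpha,\beta)$ and $(\tilde q,\tilde\alpha,\tilde\beta)$ give the same spectrum, then $\Delta_+=c\,\tilde\Delta_+$, hence $AB=c\,\tilde A\tilde B$; comparing zero-clusters, either $A$ and $\tilde A$ have the same zeros or $A$ and $\tilde B$ do, and the latter is impossible since $A$ is bounded on $\mathbb R$ while $\tilde B$ grows like $|\lambda|$. Thus $A=\kappa\tilde A$, and comparing the coefficients $\tfrac{1\pm\alpha}{2}$, $\tfrac{1\pm\tilde\alpha}{2}$ of $e^{\pm i\lambda a/2}$ gives $\alpha=\tilde\alpha$, $\kappa=1$, $A=\tilde A$, whence the single Regge uniqueness concludes.

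The hard part will be the splitting step, since the asymptotics control only the eigenvalues of large index: the finitely many low-index ones must be assigned to $A$ or $B$ correctly, and this is exactly where $\Delta_+(0)\neq0$ and the identity $A(\lambda)B(-\lambda)-A(-\lambda)B(\lambda)=-2i\alpha\lambda$ are used to fix the counting and to prevent repartitioning of the two factors. One must also treat $\alpha=\alpha_0=1$ separately, since by Theorem \ref{th1}(3) the spectral picture is then entirely different (possibly only finitely many eigenvalues), arguing there directly from the explicit formula \eqref{o26}.
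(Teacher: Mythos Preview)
Your midpoint factorization $\Delta_+(\lambda)=2A(\lambda)B(\lambda)$ with $A=y(\cdot,a/2)$, $B=y'(\cdot,a/2)$ is correct and elegant, and so is the Wronskian identity $A(\lambda)B(-\lambda)-A(-\lambda)B(\lambda)=-2i\alpha\lambda$. But the splitting step has a real gap. From $AB=c\,\tilde A\tilde B$ you assert the dichotomy ``either $A$ and $\tilde A$ have the same zeros or $A$ and $\tilde B$ do''; the zero-cluster asymptotics only force this for large indices. Nothing you have written prevents finitely many low-index zeros from being partitioned differently between $A,\tilde A$ and $B,\tilde B$: a single zero $\lambda_0$ could be a zero of $A$ and of $\tilde B$, compensated by another zero $\lambda_1$ of $B$ and $\tilde A$, without violating $AB=c\,\tilde A\tilde B$ or either Wronskian identity. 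Your growth argument (``$A$ bounded on $\mathbb R$, $\tilde B$ grows like $|\lambda|$'') would only apply if \emph{all} zeros of $A$ coincided with those of $\tilde B$, which is not the situation in question. You flag this as the hard part, but you do not resolve it.

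The paper sidesteps the splitting entirely by a different mechanism that your own objects actually encode. In your notation one has $\Delta_-(\lambda)=A(\lambda)B(-\lambda)+A(-\lambda)B(\lambda)$, which is manifestly even in $\lambda$; combining this with your Wronskian identity and $\Delta_+(\lambda)\Delta_+(-\lambda)=4A(\lambda)A(-\lambda)B(\lambda)B(-\lambda)$ yields
\[
\Delta_-(\lambda)^2=\Delta_+(\lambda)\Delta_+(-\lambda)-4\alpha^2\lambda^2,
\]
which is exactly the identity \eqref{e7} specialized to $\alpha_0=\alpha$. Since $\Delta_-(0)=\Delta_+(0)\neq0$, the branch of the square root is fixed, so $\Delta_-/(1+\alpha^2)$ is determined from $\Delta_+/(1+\alpha^2)$ and the known quantity $2\alpha/(1+\alpha^2)$. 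Then $\Delta_+-\Delta_-=2i\alpha\lambda\,\Delta_0$ gives $\Delta_0(\lambda)=y(\lambda,a)$, and the single-Regge uniqueness on the \emph{full} interval $(0,a)$ finishes. This is precisely the paper's route: rather than separating the zeros of $A$ and $B$, it recovers the symmetric combination $\Delta_-$ globally and reduces to $\Delta_0$ on $(0,a)$, not to $A$ on $(0,a/2)$. The assumption $\Delta_+(0)\neq0$ is used only to select the branch, not to control any low-index partition.
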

\begin{proof}
    Using Theorem \ref{th1} and $\alpha=\alpha_{0}$, we know that $\frac{2\alpha}{1+\alpha^2}$ is uniquely determined. Indeed, we can first determine whether $\alpha=1$ or not, by observing the number and the imaginary parts of  the eigenvalues $\left\{\lambda_{k}^+\right\}$. If $\alpha\ne1$, then we can use the asymptotics of the eigenvalues to recover $\frac{2\alpha}{1+\alpha^2}$.  Using Lemma \ref{hx11} and \eqref{c16}, whether $\alpha=1$ or not, we can obtain that the function $\frac{\Delta_+(\lambda)}{1+\alpha^2}$ is uniquely determined from its zeros. Using \eqref{k9}, \eqref{e15} and \eqref{o30}, we have
    \begin{equation}\label{o31}
        \frac{\Delta_+(\lambda)}{1+\alpha^2}-\frac{\Delta_{-}(\lambda)}{1+\alpha^2}=\frac{2i\alpha\lambda\Delta_{0}(\lambda)}{1+\alpha^2}.
    \end{equation}
It is known \cite{X}  that all zeros of $\Delta_{0}(\lambda)$ uniquely determine $q(x)$, $\alpha$ and $\beta$ under the assumption $\alpha\ne 1$. When $\alpha=1$, one can use a similar argument in \cite[Theorem 3.1]{XP} to prove the uniqueness theorem. 
Thus, it is enough to prove $\frac{\Delta_+(\lambda)}{1+\alpha^2}$ uniquely determines $\frac{\Delta_{-}(\lambda)}{1+\alpha^2}$.
    Define
    \begin{equation*}
        y_{a}\left(\lambda,x\right):=c\left(\lambda,a-x\right)-i\alpha\lambda s\left(\lambda,a-x\right).
    \end{equation*}
 Obviously, it satisfies
    \begin{align}
    \notag
     y_{a}\left(\lambda,x\right)=  y(-\lambda,a-x),\quad  y_{a}\left(\pm \lambda,a\right)=1,\quad y'_{a}\left(\pm \lambda,a\right)=\pm i\alpha\lambda-\beta.
    \end{align}
Since $q(x)=q(a-x)$, $\alpha=\alpha_0$ and $\beta=\beta_0$, using \eqref{e15}, we have
    \begin{equation}\label{e8}
        \Delta_{-}\left(\lambda\right)=-\left<y\left(\lambda,x\right),y_{a}\left(\lambda,x\right)\right>.
    \end{equation}
    Letting $x=\frac{a}{2}$ in \eqref{e8}, then we have
    \begin{align}
    \notag
        \Delta_{-}\left(\lambda\right)&=-y\left(\lambda,\frac{a}{2}\right)y'_{a}\left(\lambda,\frac{a}{2}\right)+y_{a}\left(\lambda,\frac{a}{2}\right)y'\left(\lambda,\frac{a}{2}\right)\\
        &\notag=-y_{a}\left(-\lambda,\frac{a}{2}\right)y'_{a}\left(\lambda,\frac{a}{2}\right)-y_{a}\left(\lambda,\frac{a}{2}\right)y_{a}'\left(-\lambda,\frac{a}{2}\right).
    \end{align}
    It follows that $\Delta_{-}\left(-\lambda\right)=\Delta_{-}\left(\lambda\right)$. Using \eqref{e7}, we have
    \begin{equation}
        \frac{\Delta_{-}(\lambda)}{1+\alpha^2}=\pm\sqrt{\frac{\Delta_+(\lambda)\Delta_+(-\lambda)}{(1+\alpha^2)^2}-\frac{4\alpha^2\lambda^2}{(1+\alpha^2)^2}}
    \end{equation}
    To determine which branch is the right choose, we note that $\frac{\Delta_{-}(0)}{1+\alpha^2}=\frac{\Delta_+(0)}{1+\alpha^2}\neq0$ that follows from \eqref{k9} and \eqref{e15}. Thus, $\frac{\Delta_{-}(\lambda)}{1+\alpha^2}$ is uniquely determined by $\frac{\Delta_+(\lambda)}{1+\alpha^2}$.
    The proof is complete.
\end{proof}
\begin{remark}
If $\Delta_+(0)=0$, then the uniqueness may not hold. It was known \cite{EK,Zo1} that for the problem $L(q,1,0,1,0)$, in the case $\Delta_+(0)=0$, there exist two even potentials corresponding to the same set of eigenvalues.
\end{remark}

Now, let us prove the so-called two-spectra theorem. Note that $\Delta_+(\lambda)$ and $\Delta_-(\lambda)$ have no common non-zero zeros.

\begin{theorem}\label{tht}
If $\alpha_0\ne1$ (or $\alpha\ne1$) and is known a priori, then all zeros of $\Delta_+(\lambda)$ and all non-zero  zeros of $\Delta_-(\lambda)$ (including multiplicity) uniquely determine $\alpha$ ($\alpha_0$), $\beta_0$, $\beta$ and  $q(x)$ a.e. on $(0,a)$.
\end{theorem}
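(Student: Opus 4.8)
The plan is to reconstruct, in turn, the characteristic functions $\Delta_+$, $\Delta_-$ and $\Delta_0$ from the given data, and then to invoke the known inverse theorem for a Schr\"odinger operator with a single generalized Regge condition. I carry this out under the hypothesis that $\alpha_0\ne1$ is known; the case ``$\alpha\ne1$ known'' is entirely symmetric, up to the remark made at the end. If $\alpha_0=0$, the problem $L(q,0,\beta_0,\alpha,\beta)$ has only one generalized Regge condition and the assertion follows at once from the single-condition uniqueness results recalled in the introduction applied to $\Delta_+$; so assume $\alpha_0>0$. Putting $z=\lambda a$, the functions $\Delta_\pm(z/a)$ have precisely the form \eqref{hx10} required by Lemma \ref{hx11}, with coefficients read off from \eqref{c16},
\[
c_1^{\pm}=\frac{i(\alpha_0\pm\alpha)}{a},\qquad c_2^{\pm}=-\frac{1\pm\alpha\alpha_0}{a},
\]
and $|c_1^{\pm}|^2+|c_2^{\pm}|^2\ne0$ because $\alpha>0$ and $\alpha_0\ne1$.

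The first step is to recover $\alpha$ from the zeros $\{\lambda_k^+\}$ of $\Delta_+$ by means of Theorem \ref{th1}. If $\{\lambda_k^+\}$ is finite or ${\rm Im}\,\lambda_k^+\to\infty$, then part (3) forces $(\alpha_0-1)(1-\alpha)=0$, hence $\alpha=1$. Otherwise parts (1) and (2) apply: one reads off $P_0^+=2a\lim_{|k|\to\infty}{\rm Im}\,\lambda_k^+$, and the asymptotic real parts, that is, whether ${\rm Re}\,\lambda_k^+$ accumulates at $\frac{\pi}{2a}$ or at $0$ modulo $\frac{\pi}{a}$, reveal the sign of $(\alpha_0-1)(1-\alpha)$ and hence, the sign of $\alpha_0-1$ being known, whether $\alpha<1$ or $\alpha>1$. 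By \eqref{k10} one has $e^{P_0^+}=\frac{(1+\alpha_0)(1+\alpha)}{|\alpha_0-1|\,|1-\alpha|}$, which leaves only the two reciprocal candidates $\alpha$ and $1/\alpha$; these lie on opposite sides of $1$, so the sign already determined picks out the right one. Thus $\alpha$, and with it all the coefficients $c_{1,2}^{\pm}$ above, is known.

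Since $c_1^{+}=\frac{i(\alpha_0+\alpha)}{a}\ne0$ is now known, Lemma \ref{hx11} applied to $\Delta_+(z/a)$ shows that the zeros of $\Delta_+$ determine $\Delta_+$ uniquely as an entire function. To reconstruct $\Delta_-$ I must adjoin to the given non-zero zeros the order $m_-$ of vanishing of $\Delta_-$ at the origin; this is supplied by \eqref{e7}. Indeed $\Delta_-(\lambda)\Delta_-(-\lambda)=\Delta_+(\lambda)\Delta_+(-\lambda)-4\alpha\alpha_0\lambda^2$, whose right-hand side is now a known nontrivial entire function, and writing $\Delta_-(\lambda)=\lambda^{m_-}g(\lambda)$ with $g(0)\ne0$ one sees the product vanishes at $0$ to exact order $2m_-$, so $m_-$ is read off. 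Now all zeros of $\Delta_-$ with multiplicity are known, and a second application of Lemma \ref{hx11} to $\Delta_-(z/a)$, using that $c_1^{-},c_2^{-}$ are known and not both zero, yields $\Delta_-$.

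Knowing $\Delta_+$ and $\Delta_-$, the identities \eqref{k9}, \eqref{e15} and \eqref{o30} give the known entire function $\Delta_0(\lambda)=\frac{\Delta_+(\lambda)-\Delta_-(\lambda)}{2i\alpha\lambda}$, which by the discussion around \eqref{hx12}--\eqref{hx14} is the characteristic function of the Schr\"odinger operator on $(0,a)$ with Dirichlet condition at $0$, the single generalized Regge condition $y'(a)+(i\alpha_0\lambda+\beta_0)y(a)=0$, and potential $q_1(x)=q(a-x)$. Since $\alpha_0\ne1$, the known single-generalized-Regge uniqueness theorem (see, e.g., \cite{X}) shows that the zeros of $\Delta_0$ determine $q_1$, hence $q$ a.e.\ on $(0,a)$, together with $\beta_0$ (and $\alpha_0$, already known). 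Finally, $q$ and $\beta_0$ being known, $y(\lambda,x)=c(\lambda,x)+i\alpha_0\lambda s(\lambda,x)$ is an explicitly known solution of \eqref{k1}--\eqref{k2}, so from $\Delta_+(\lambda)=y'(\lambda,a)+(i\alpha\lambda+\beta)y(\lambda,a)$ and any $\lambda$ with $y(\lambda,a)\ne0$ one solves $\beta=\bigl(\Delta_+(\lambda)-y'(\lambda,a)-i\alpha\lambda\,y(\lambda,a)\bigr)/y(\lambda,a)$. This finishes the case treated; in the symmetric case one recovers $\alpha_0$ first, and should $\alpha_0=1$ occur, the step involving $\Delta_0$ must be run instead as in \cite[Theorem 3.1]{XP}. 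The crux of the argument is the third paragraph: reconstructing $\Delta_-$ as an entire function from only its \emph{non-zero} zeros, where the quadratic identity \eqref{e7} is precisely what supplies the missing datum at the origin, while the \emph{a priori} knowledge of one boundary parameter furnishes, through Lemma \ref{hx11}, the correct normalization. A secondary delicate point is extracting $\alpha$ cleanly from the eigenvalue asymptotics, in particular distinguishing $\alpha$ from $1/\alpha$ and handling the borderline value $\alpha=1$.
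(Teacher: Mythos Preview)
Your proof is correct and follows the same overall route as the paper: recover $\alpha$ from the eigenvalue asymptotics via Theorem~\ref{th1}, rebuild $\Delta_+$ and then $\Delta_-$ as entire functions using Lemma~\ref{hx11} together with the quadratic identity \eqref{e7} to supply the missing multiplicity at the origin, and finally reduce to an inverse problem with a single generalized Regge condition. The only substantive difference is in this last reduction. The paper forms the \emph{sum}
\[
\Delta_+(\lambda)+\Delta_-(\lambda)=2\bigl(y'(\lambda,a)+\beta\,y(\lambda,a)\bigr),
\]
recognises it as the characteristic function of a Robin--Regge problem for $q_1$ (Robin parameter $\beta$, Regge parameter $\alpha_0$), and invokes \cite[Theorem~3.1]{XP}; this recovers $q$, $\alpha_0$, $\beta_0$ and $\beta$ simultaneously. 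You instead form the \emph{difference} $\Delta_+-\Delta_-=2i\alpha\lambda\,\Delta_0(\lambda)$, land on the Dirichlet--Regge characteristic function $\Delta_0$, invoke \cite{X} to obtain $q$ and $\beta_0$, and then read off $\beta$ separately from the explicit formula for $\Delta_+$. Both choices are valid; the paper's is marginally cleaner since $\beta$ falls out automatically and the symmetric case ``$\alpha\ne1$ known, $\alpha_0$ possibly equal to $1$'' is absorbed by the same Robin--Regge reference without a case split.
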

\begin{proof}
Since $\alpha_0\ne1$ (or $\alpha\ne1$) and is known a priori, using Theorem \ref{th1}, we can determine the sign of $\left(\alpha_{0}-1\right)\left(1-\alpha\right)$. Indeed, if there are only finitely many eigenvalues $\{\lambda_n^+\}$ or infinitely many eigenvalues $\{\lambda_n^+\}$ with unbounded imaginary parts, then $\left(\alpha_{0}-1\right)\left(1-\alpha\right)=0$. If there are infinitely many eigenvalues $\{\lambda_n^+\}$ with bounded imaginary parts, then $\left(\alpha_{0}-1\right)\left(1-\alpha\right)\ne0$. In the latter case, if $|\sin (a{\rm Re}\lambda_n^+)|\to 1$ as $n\to\infty $ then $\left(\alpha_{0}-1\right)\left(1-\alpha\right)>0$; if $|\sin (a{\rm Re} \lambda_n^+)|\to 0$ as $n\to\infty $ then $\left(\alpha_{0}-1\right)\left(1-\alpha\right)<0$.  In particular, we can determine whether $\alpha=1$ ($\alpha_0=1$) or not.  If $\alpha\neq1$ ($\alpha_0\neq1$),  then we can recover the term $\frac{\alpha+\alpha_0}{1+\alpha\alpha_0}$ from the the asymptotics of the eigenvalues, and so $\alpha$ ($\alpha_0$) is uniquely determined. Using Lemma \ref{hx11} and \eqref{c16}, we can obtain that the function $\Delta_+(\lambda)$ is uniquely determined from its zeros. If $\alpha=1$, since $\alpha_0$ is known, by using Lemma \ref{hx11} and \eqref{c16}, we can still obtain that the function $\Delta_+(\lambda)$ is uniquely determined from its zeros. Using \eqref{e7}, it is easy to get whether $\lambda=0$ is a zero of $\Delta_-(\lambda)$. If $\Delta_-(0)=0$, then the multiplicity of $\lambda=0$ is also uniquely determined by using \eqref{e7}. Then, together with the condition of this theorem, we know that all zeros of $\Delta_-(\lambda)$ are known. Then using Lemma \ref{hx11} and \eqref{c16}, we know that $\Delta_-(\lambda)$ is uniquely determined by its zeros.
Using \eqref{k9}, \eqref{e15} and \eqref{o30}, we have
    \begin{equation}\label{o31s}
      {\Delta_+(\lambda)}+{\Delta_{-}(\lambda)}=y'\left(\lambda,a\right)+\beta y\left(\lambda,a\right).
    \end{equation}
    The right-hand side of \eqref{o31s} is the characteristic function of the problem \eqref{hx12}, \eqref{hx14} and $y'(0)-\beta y(0)=0$.
   Using Theorem 3.1 in \cite{XP}, we complete the proof.
\end{proof}

\begin{remark}(1) If $q(x)$ is real valued and $\beta,\beta_0\in \mathbb{R}$, then we can use the signs of imaginary parts of  zeros of $\Delta_-(\lambda)$ instead of the zeros of $\Delta_-(\lambda)$. Indeed, after obtaining $\Delta_+(\lambda)$ and $\alpha\alpha_0$,  the function $g(\lambda):=\Delta_-(\lambda)\Delta_-(-\lambda)$ is determined (cf.\eqref{e7}). 
Let $\{\xi_n\}$ (counted with multiplicities) be the zeros of $g(\lambda)$ in $\overline{\mathbb{C}_+}:=\{\lambda: {\rm Im}\lambda\ge0\}$. Then using Theorem \ref{th2x} (2)(ii), we know that $\xi_n$ or $\bar{\xi}_n$ is a zero of $\Delta_-(\lambda)$. Define
\begin{equation*}
\sigma_n={\rm sgn } ({\rm  Im} \lambda_n^-)=\left\{ \begin{split}
    +1,\quad &\text{if} \quad \lambda_n^-= \xi_n\in \mathbb{C}_+, \\
     -1,\quad &\text{if} \quad  \lambda_n^-=\bar{\xi}_n\in \mathbb{C}_-,\\
     0,\quad &\text{if} \quad  \lambda_n^-={\xi}_n\in \mathbb{R}.
 \end{split}\right.
\end{equation*}
Using the set of signs $\{\sigma_n\}$ and the zeros of $g(\lambda)$ in $\overline{\mathbb{C}_+}$, we can uniquely recovered all zeros of $\Delta_-(\lambda)$. We admit that this property was first observed by Korotyaev \cite{EK} in studying the inverse resonance problem $L(q,1,0,1,0)$. By contrast, when $\alpha\ne1$ and $\alpha_0\ne1$,  it is possible that all zeros of $\Delta_+(\lambda)$ together with at most  a finite number of the signs $\{\sigma_n\}$  uniquely recover all zeros of $\Delta_-(\lambda)$. For example, if $\alpha$ and $\alpha_0$ are determined such that $P_0^->0$, then from the asymptotics of $\lambda_n^-$ we know that there exist only finitely many $\lambda_n^-$s
in $\overline{\mathbb{C}_-}$. So, in this case,  there exist at most finitely many elements in $\{\xi_n\}$ belonging to the set of zeros of $\Delta_-(-\lambda)$ and the other elements are zeros of $\Delta_-(\lambda)$. That is to say, in this case, we only need at most  a finite number of signs to distinguish the zeros of $\Delta_-(-\lambda)$ and the zeros of $\Delta_-(\lambda)$ from all zeros of $g(\lambda)$ in $\overline{\mathbb{C}_+}$, and so all zeros of $\Delta_-(\lambda)$ are determined.

(2) If the known $\alpha_0$ is equal to $1$, it is unclear that if $\alpha$ can be determined. If $\alpha\ne1$ and is given, then it is included in Theorem \ref{tht}. If $\alpha_0=1=\alpha$, then $\Delta_+(\lambda)$ is still uniquely determined from its zeros by using Lemma \ref{hx11} and \eqref{c16}. Whereas, $\Delta_-(\lambda)$ is uniquely determined from its zeros provided that there is a non-zero $b_0\in\{\beta,\beta_0,\beta+\beta_0,\beta-\beta_0\}$ and $b_0$ is known. If $\beta=\beta_0=0$, i.e., in the case of the resonance problem $L(q,1,0,1,0)$, all zeros of $\Delta_-(\lambda)$ cannot uniquely determine $\Delta_-(\lambda)$ unless an additional sign is given (see \cite{EK}).

(3) If $(\alpha_0-1)(\alpha-1)\ne0$ and the sign of either  $\alpha_0-1$ or $\alpha-1$ is known a priori, then $\alpha$ and $\alpha_0$ can all be uniquely recovered from the asymptotics of the eigenvalues $\{\lambda_k^\pm\}$. Indeed, we can first recover two numbers $P_0^\pm$ and then it follows from \eqref{k10} that
\begin{equation*}
e^{P_0^++P_0^-}=\frac{(\alpha_0+1)^2}{(\alpha_0-1)^2},\quad e^{P_0^+-P_0^-}=\frac{(\alpha+1)^2}{(\alpha-1)^2}.
\end{equation*}
If the  sign of $\alpha_0-1$ (or $\alpha-1$) is known, then $\alpha_0$ ($\alpha$) is obtained uniquely. Substituting $\alpha_0$ (or $\alpha$) into $P_0^+$, we get  $\alpha$ ($\alpha_0$).
\end{remark}

Next, let us give two uniqueness theorems for the general partial inverse problem, namely, the potential function is known in a subinterval $(b,a)$ with arbitrary $b\in (0,a)$.

\begin{lemma}[\textup{see  \cite[p.173]{B}}]\label{le1}
    For any entire function $f(z)\not\equiv0$ of exponential type, the following inequality holds true:
    \begin{equation}
        \mathop {\varliminf }\limits_{r \to \infty}  \frac{n_f(r)}{r}\leq\frac{1}{2\pi}\int_0^{2\pi}h_f(\theta)d\theta,
    \end{equation}
    where $n_f(r)$ denotes the number of zeros of $f(z)$ in the disk $|z|\le r$, and  $h_f(\theta)$ is the indicator function of $f(z)$ that is defined by
  \begin{equation*}
    h_f(\theta):=\mathop {\varlimsup }\limits_{r \to \infty }\frac{\ln |f(re^{i\theta})|}{r}.
  \end{equation*}
\end{lemma}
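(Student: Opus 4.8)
The statement to prove is Lemma~\ref{le1}, which I should note is cited from Boas's book \cite[p.173]{B}, so in the paper it is presented without proof. Nevertheless, here is how I would prove it if asked.

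\medskip

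\textbf{Proof proposal.} The plan is to combine the classical Jensen formula with the definition of the indicator function and a limiting argument. First, recall that for an entire function $f$ with $f(0)\neq 0$, Jensen's formula gives
\begin{equation*}
\int_0^r \frac{n_f(t)}{t}\,dt = \frac{1}{2\pi}\int_0^{2\pi}\ln|f(re^{i\theta})|\,d\theta - \ln|f(0)|.
\end{equation*}
(If $f(0)=0$ one factors out the appropriate power of $z$, which changes both sides by a bounded amount and does not affect the asymptotics; so assume $f(0)\neq 0$.) Since $n_f(t)$ is nondecreasing, the left-hand side is at least $n_f(r/2)\int_{r/2}^r dt/t = n_f(r/2)\ln 2$, hence for a suitable absolute constant the left side dominates a multiple of $n_f(r)$ after a harmless rescaling; more precisely one gets $n_f(r)\ln 2 \le \int_0^{2r}\frac{n_f(t)}{t}dt$, so it suffices to bound $\varliminf_{r\to\infty}\frac{1}{r}\int_0^{2r}\frac{n_f(t)}{t}dt$.

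\medskip

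Second, I would estimate the right-hand side of Jensen's formula using the indicator. By definition of $h_f(\theta)$ as a $\varlimsup$, for any $\varepsilon>0$ there is $R_\varepsilon$ such that $\ln|f(re^{i\theta})|\le (h_f(\theta)+\varepsilon)r$ for all $r\ge R_\varepsilon$ and all $\theta$; here one uses that $f$ has exponential type, so $h_f$ is a bounded function and the bound can be made uniform in $\theta$ (this uniformity is the standard fact that the regularized indicator is the upper envelope and the convergence is uniform on compact $\theta$-sets — actually on all of $[0,2\pi]$ by periodicity and continuity of $h_f$ for functions of exponential type). Dividing the Jensen identity by $r$ and letting $r\to\infty$ along a sequence realizing the $\varliminf$ on the left, then using Fatou/monotone control on the right, yields
\begin{equation*}
\varliminf_{r\to\infty}\frac{1}{r}\int_0^r\frac{n_f(t)}{t}\,dt \le \frac{1}{2\pi}\int_0^{2\pi}(h_f(\theta)+\varepsilon)\,d\theta,
\end{equation*}
and letting $\varepsilon\to0$ removes the $\varepsilon$.

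\medskip

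Third, I would reconcile the $\varliminf\frac{1}{r}\int_0^r\frac{n_f(t)}{t}dt$ with the desired quantity $\varliminf\frac{n_f(r)}{r}$. One has $n_f(r)\le \frac{1}{\ln 2}\int_r^{r^2}\frac{n_f(t)}{t}dt$ is too crude; instead use that $\int_0^{r}\frac{n_f(t)}{t}dt$ and $n_f(r)$ have, for functions of exponential type, the same lower density because $n_f(t)/t$ is asymptotically of order one and an Abel/Cesàro-type argument shows $\varliminf n_f(r)/r \le \varliminf \frac{1}{r}\int_0^r \frac{n_f(t)}{t}dt$ whenever the integrand is eventually bounded (which holds: $n_f(t)=O(t)$ by the type assumption). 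Chaining the three steps gives exactly $\varliminf_{r\to\infty} n_f(r)/r \le \frac{1}{2\pi}\int_0^{2\pi}h_f(\theta)\,d\theta$.

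\medskip

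The main obstacle, and the step deserving the most care, is the passage from the $\varlimsup$ defining $h_f(\theta)$ pointwise to a bound on $\ln|f(re^{i\theta})|$ that is uniform in $\theta$ and usable under the $\theta$-integral: one cannot simply interchange $\varlimsup$ and $\int$. The clean way is to invoke the theory of the indicator for entire functions of exponential type (e.g. that $h_f$ is the supporting function of a compact convex set — the conjugate indicator diagram — hence continuous, and that $\frac{\ln|f(re^{i\theta})|}{r}$ is $\le h_f(\theta)+\varepsilon$ uniformly once $r$ is large, outside an exceptional set of small measure that can be absorbed), which is precisely the machinery Boas develops; granting that, the rest is the Jensen-formula bookkeeping sketched above.
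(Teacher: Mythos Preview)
You correctly observe at the outset that the paper does not prove this lemma; it simply quotes it from Boas \cite[p.173]{B}. So there is nothing in the paper to compare your argument against.

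Your sketch is essentially the standard proof (the one behind the reference): Jensen's formula gives
\[
\frac{1}{r}\int_0^r\frac{n_f(t)}{t}\,dt=\frac{1}{2\pi r}\int_0^{2\pi}\ln|f(re^{i\theta})|\,d\theta-\frac{\ln|f(0)|}{r},
\]
the uniform indicator bound $\ln|f(re^{i\theta})|\le(h_f(\theta)+\varepsilon)r$ for all $\theta$ and $r\ge R_\varepsilon$ (valid for entire functions of exponential type by the trigonometric-convexity/Phragm\'en--Lindel\"of theory you allude to) controls the right-hand side, and the Ces\`aro-type inequality $\varliminf n_f(r)/r\le\varliminf\frac{1}{r}\int_0^r\frac{n_f(t)}{t}\,dt$ closes the loop. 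You also correctly flag the uniformity in $\theta$ as the only nontrivial ingredient.

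Two minor clean-ups. First, the $\ln 2$ manipulation in your Step~1 would lose a factor $2/\ln 2$ and does not give the sharp constant; you rightly drop it in favor of the Ces\`aro argument in Step~3, so Step~1 can be pruned. Second, the Ces\`aro inequality in Step~3 does not need $n_f(t)/t$ to be eventually bounded: if $L=\varliminf_{t\to\infty} n_f(t)/t$, then for every $\varepsilon>0$ one has $n_f(t)/t\ge L-\varepsilon$ for all $t\ge R$, whence $\frac{1}{r}\int_0^r\frac{n_f(t)}{t}\,dt\ge(L-\varepsilon)(1-R/r)+O(1/r)$, so $\varliminf\frac{1}{r}\int_0^r\frac{n_f(t)}{t}\,dt\ge L$ with no extra hypothesis.
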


From Theorem \ref{th1}, we see that
\begin{equation*}
  n_{\Delta_\pm}(r)=\frac{2tr}{\pi}(1+o(1)),\quad r\to\infty,
\end{equation*}
where $t= a$ if $(\alpha_0-1)(1-\alpha)\ne0$ and $t\le a$ if $(\alpha_0-1)(1-\alpha)=0$.
 Let $\Omega_\pm$  be the sets of the zeros of the functions $\Delta_\pm(\lambda)$, respectively.
Denote $\Omega_0:=\Omega_+\cup\Omega_-$. Let $ n_{\Omega_0}(r)$ be the number of the points in $\Omega_0\cap \{\lambda:|\lambda|\le r\}.$ 

\begin{theorem}\label{th6}
    Assume that $q(x)$ is known a priori a.e. on $\left(b,a\right)$ with $b\in (0,a)$ and $\alpha$ and $\beta$ are given. If the subset $\Omega_0$ satisfies
    \begin{equation}
         n_{\Omega_0}(r)=\frac{2mr}{\pi}[1+o(1)],\quad r \to \infty,\quad  m>2b,
    \end{equation}
    then $\Omega_0$ uniquely determines $\alpha_{0}$ and $\beta_{0}$ and $q(x)$ a.e. on $\left(0,b\right)$.
\end{theorem}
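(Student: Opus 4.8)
The plan is to combine the partial-data uniqueness method for Schrödinger operators — recovering the potential on $(0,b)$ and the left-endpoint coefficients from a dense enough set of zeros — with the structure already developed in this paper: the two characteristic functions $\Delta_\pm(\lambda)$ together encode the full boundary data at $x=0$, and from the asymptotics in Theorem~\ref{th1} we can read off $\alpha_0$ whenever $(\alpha_0-1)(1-\alpha)\ne0$. So first I would suppose two problems $L(q,\alpha_0,\beta_0,\alpha,\beta)$ and $L(\tilde q,\tilde\alpha_0,\tilde\beta_0,\alpha,\beta)$ (same given $\alpha,\beta$, and $q=\tilde q$ a.e.\ on $(b,a)$) have the same set $\Omega_0$ of zeros of $\Delta_+$ and $\Delta_-$ (with multiplicities). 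Using Theorem~\ref{th1} applied to each problem, the counting asymptotics of $\{\lambda_k^\pm\}$ force $(\alpha_0-1)(1-\alpha)$ and $(\tilde\alpha_0-1)(1-\alpha)$ to have the same ``type'' (finite / infinite-with-unbounded-imaginary-part / infinite-with-bounded-imaginary-part), and in the generic case the term $P_0^\pm$ in \eqref{k10} recovers $\alpha_0$, hence $\tilde\alpha_0=\alpha_0$.

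Next I would set up the standard partial-inverse machinery. Let $\phi(\lambda,x)$ and $\tilde\phi(\lambda,x)$ be the solutions of \eqref{k1} (for $q$ and $\tilde q$ respectively) with the Cauchy data at $x=a$ prescribed by $\alpha,\beta$ as in the excerpt; since $q=\tilde q$ on $(b,a)$ and the data at $a$ coincide, $\phi(\lambda,x)=\tilde\phi(\lambda,x)$ for $x\in[b,a]$, and in particular $\phi^{(\nu)}(\lambda,b)=\tilde\phi^{(\nu)}(\lambda,b)$, $\nu=0,1$. Now form the entire function
\begin{equation*}
  H(\lambda):=\Delta_+(\lambda)\tilde\Delta_-(\lambda)-\tilde\Delta_+(\lambda)\Delta_-(\lambda),
\end{equation*}
or a suitable variant built from $\Delta_+\tilde\Delta_+$-type products; every element of $\Omega_0$ (which is common to both problems) is a zero of $H$ of at least the right multiplicity, so $n_H(r)\ge n_{\Omega_0}(r)=\frac{2mr}{\pi}(1+o(1))$ with $m>2b$. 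On the other hand, each $\Delta_\pm$ and $\tilde\Delta_\pm$ has exponential type $\le a$, but — and this is the crucial point — because both problems share the solution $\phi$ on $[b,a]$ (equivalently, because the Wronskian representation \eqref{e10} lets us split $\Delta_\pm(\lambda)=\langle\phi,y\rangle$ with $\phi$ controlled on $[b,a]$ and the ``$y$'' part built from the left piece on $[0,b]$), the combination $H(\lambda)$ actually has exponential type $\le 2b$. Then Lemma~\ref{le1} gives
\begin{equation*}
  \frac{2m}{\pi}\le\varliminf_{r\to\infty}\frac{n_H(r)}{r}\le\frac{1}{2\pi}\int_0^{2\pi}h_H(\theta)\,d\theta\le\frac{2b}{\pi},
\end{equation*}
i.e.\ $m\le 2b$, contradicting $m>2b$ — unless $H\equiv0$.

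From $H\equiv0$ I would conclude $\dfrac{\Delta_+(\lambda)}{\tilde\Delta_+(\lambda)}=\dfrac{\Delta_-(\lambda)}{\tilde\Delta_-(\lambda)}$, and together with \eqref{e7} (which fixes $\Delta_+(\lambda)\Delta_+(-\lambda)-\Delta_-(\lambda)\Delta_-(-\lambda)=4\alpha\alpha_0\lambda^2=4\alpha\tilde\alpha_0\lambda^2$, using the already-established $\tilde\alpha_0=\alpha_0$) deduce that $\Delta_+\equiv\tilde\Delta_+$ and $\Delta_-\equiv\tilde\Delta_-$, hence also $\Delta_+(\lambda)+\Delta_-(\lambda)=\Delta_+(\lambda)+\tilde\Delta_-(\lambda)$, which by \eqref{o31s} is the characteristic function of the problem \eqref{hx12},\eqref{hx14}, $y'(0)-\beta y(0)=0$ for $q$ and for $\tilde q$. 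At this stage the remaining data ($\beta_0$, $q$ on $(0,b)$) are recovered by invoking the full-data uniqueness for that Robin problem (Theorem~3.1 in \cite{XP}, already used in the proof of Theorem~\ref{tht}), since knowing $\Delta_+$ and $\Delta_-$ as functions recovers $\Delta_0(\lambda)=y(\lambda,a)$ via \eqref{o31}-type identities, and with $q$ already known on $(b,a)$ and $\alpha_0$ known one recovers $q$ on $(0,b)$ and $\beta_0$. The main obstacle I anticipate is the exponential-type bookkeeping for $H(\lambda)$: one must verify carefully, from the integral representations \eqref{r2}-\eqref{k8} together with the equality $\phi=\tilde\phi$ on $[b,a]$, that the differences entering $H$ kill the ``$e^{ia\lambda}$ off $[0,b]$'' part and leave exponential type exactly $\le 2b$; a secondary (minor) technical point is to handle multiplicities in $\Omega_0$ and the degenerate case $(\alpha_0-1)(1-\alpha)=0$, where $\alpha_0=1$ is forced and the argument simplifies since then $\alpha\ne1$ makes $\tilde\alpha_0=1$ automatic from Theorem~\ref{th1}(3).
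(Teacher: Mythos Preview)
Your core idea --- build an entire function that vanishes on $\Omega_0$, bound its indicator by something of size $2b$, and invoke Lemma~\ref{le1} --- is exactly what the paper does. But your execution takes a detour that creates unnecessary work, and the paper's route is both shorter and avoids the hypotheses you end up needing.

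The paper does \emph{not} form your $H(\lambda)=\Delta_+\tilde\Delta_--\tilde\Delta_+\Delta_-$. Instead it writes down directly
\[
F(\lambda)=y(\lambda,b)\,\tilde y'(\lambda,b)-\tilde y(\lambda,b)\,y'(\lambda,b),
\]
the Wronskian at $x=b$ of the two left solutions. From the representations \eqref{r2}--\eqref{k8} one gets $|F(\lambda)|\le C|\lambda|e^{2b|\mathrm{Im}\lambda|}$ immediately, so the indicator satisfies $h_F(\theta)\le 2b|\sin\theta|$ and $\tfrac{1}{2\pi}\int_0^{2\pi}h_F\le\tfrac{4b}{\pi}$ (your displayed bound $\tfrac{2b}{\pi}$ is off; you need the $|\sin\theta|$ refinement, otherwise the contradiction with $m>2b$ does not close). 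In fact, if you expand your $H$ using the Wronskian formulas $\Delta_\pm(\lambda)=\langle\phi(\pm\lambda,\cdot),y(\lambda,\cdot)\rangle|_{x=b}$ and $\langle\phi(-\lambda,\cdot),\phi(\lambda,\cdot)\rangle=2i\alpha\lambda$, you will find $H(\lambda)=2i\alpha\lambda\,F(\lambda)$: your function \emph{is} the paper's function up to a harmless factor, which explains why the type-$2b$ claim you flagged as the ``main obstacle'' falls out.

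The bigger divergence is in the endgame. You propose $H\equiv0\Rightarrow\Delta_\pm\equiv\tilde\Delta_\pm$ (via \eqref{e7} and a preliminary recovery of $\alpha_0$ from Theorem~\ref{th1}), then cite the two-spectra theorem. This chain is fragile: your Step~1 only recovers $\alpha_0$ ``in the generic case'', and the passage from $\Delta_+\tilde\Delta_-=\tilde\Delta_+\Delta_-$ to $\Delta_\pm\equiv\tilde\Delta_\pm$ requires controlling an exponential factor you never pin down. The paper avoids all of this: $F\equiv0$ says precisely that $y'(\lambda,b)/y(\lambda,b)=\tilde y'(\lambda,b)/\tilde y(\lambda,b)$, i.e.\ the two problems restricted to $[0,b]$ have the same Weyl $m$-function at $b$. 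Since $y(\cdot,b)$ and $y'(\cdot,b)$ have no common zeros, the zero set of $y(\lambda,b)$ is determined, and one finishes by the Dirichlet--Regge uniqueness on $[0,b]$ (Theorem~3.2 in \cite{X}). No prior knowledge of $\alpha_0$ is needed, and the degenerate case $(\alpha_0-1)(1-\alpha)=0$ needs no separate treatment.
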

\begin{proof}
    Suppose that there are two problems $L(q,\alpha_0,\beta_0,\alpha,\beta)$ and $L(\tilde{q},\tilde{\alpha}_0,\tilde{\beta}_0,\tilde{\alpha},\tilde{\beta})$, which satisfy that  $q(x)=\tilde{q}(x)$ a.e. on $\left(b,a\right)$ and $\alpha=\tilde{\alpha}$ and $\beta=\tilde{\beta}$. Let us prove $q(x)=\tilde{q}(x)$ on $\left(0,b\right)$ and $\alpha_{0}=\tilde{\alpha}_{0}$ and $\beta_{0}=\tilde{\beta}_{0}$ if $\Omega_0=\tilde{\Omega}_0$. Define
    \begin{equation}\label{p1}
        F(\lambda)=y\left(\lambda,b\right)\tilde{y'}\left(\lambda,b\right)-\tilde{y}\left(\lambda,b\right)y'\left(\lambda,b\right)
    \end{equation}
   Combing \eqref{r2} and \eqref{k8}, then we have
    \begin{equation}\label{p9}
        \lvert F(\lambda)\rvert\leq C|\lambda|e^{2b\lvert {\rm Im}\lambda\rvert}, \quad \lvert\lambda\rvert\to\infty, \quad C>0.
    \end{equation}
    Let $\lambda=re^{i\theta}$, so $\lvert {\rm Im}\lambda\rvert=r\lvert\sin\theta\rvert.$ Then using \eqref{p9}, we have
    \begin{equation}
        h_{F}(\theta):=\varlimsup\limits_{r \to \infty}\frac{\ln|F(re^{i\theta})|}{r} \le 2b|\sin \theta|,
    \end{equation}
    which implies
    \begin{equation}
        \int_{0}^{2\pi}h_{F}(\theta)d\theta \le
  2b\int_{0}^{2\pi}|\sin \theta|d\theta =
  4b\int_{0}^{\pi}\sin \theta d\theta = 8b.
    \end{equation}
    Since $q(x)=\tilde{q}(x)$ a.e. on $\left(b,a\right)$ and $\alpha=\tilde{\alpha}$ and $\beta=\tilde{\beta}$, then $\phi(\lambda,b)=\tilde{\phi}(\lambda,b)$ for all $\lambda\in \mathbb{C}$.

   Note that $\phi(\lambda,x)$ meets the conditions $\phi(\lambda,a)=1$ and $\phi'(\lambda,a)=-(i\alpha\lambda+\beta)$, it is easy to get the function $\phi(-\lambda,x)$ to satisfy the conditions $\phi(-\lambda,a)=1$ and $\phi'(-\lambda,a)=(i\alpha\lambda-\beta)$. Then we can rewrite  \eqref{e15} as
\begin{equation}\label{o64}
    \Delta_{-}(\lambda)=\left<\phi(-\lambda,x),y(\lambda,x)\right>.
\end{equation}
    Letting $x=b$ in \eqref{e10} and \eqref{o64}, we get
    \begin{align}\label{w1}
        \Delta_\pm\left(\lambda\right)=y'\left(\lambda,b\right)\phi\left(\pm\lambda,b\right)-\phi'\left(\pm\lambda,b\right)y\left(\lambda,b\right),
    \end{align}
    \begin{equation}\label{w2}
        \tilde{\Delta}_\pm\left(\lambda\right)=\tilde{y}'\left(\lambda,b\right)\phi\left(\pm\lambda,b\right)-\phi'\left(\pm\lambda,b\right)\tilde{y}\left(\lambda,b\right).
    \end{equation}
    Multiply \eqref{w1} and \eqref{w2} by $\tilde{y}'\left(\lambda,b\right)$ and $y'\left(\lambda,b\right)$, respectively, take the difference and get
     \begin{equation}\label{w3}
        F\left(\lambda\right)=\frac{y'\left(\lambda,b\right)\tilde{\Delta}_\pm \left(\lambda\right)-\tilde{y}'\left(\lambda,b\right)\Delta_\pm \left(\lambda\right)}{\phi'\left(\pm \lambda,b\right)}.
    \end{equation}
     Multiply \eqref{w1} and \eqref{w2} by $\tilde{y}\left(\lambda,b\right)$ and $y\left(\lambda,b\right)$, respectively, take the difference and get
    \begin{equation}\label{w4}
         F\left(\lambda\right)=\frac{y\left(\lambda,b\right)\tilde{\Delta}_\pm \left(\lambda\right)-\tilde{y}\left(\lambda,b\right)\Delta_\pm \left(\lambda\right)}{\phi\left(\pm \lambda,b\right)}.
    \end{equation}
     Considering \eqref{w3} and \eqref{w4}, as $\phi\left(\pm \lambda,b\right)$ and $\phi'\left(\pm\lambda,b\right)$ cannot vanish simultaneously, then we conclude that all common zeros (including multiplicity) of $\Delta_\pm\left(\lambda\right)$ and $\tilde{\Delta}_\pm\left(\lambda\right)$ are zeros of $F\left(\lambda\right)$.
    Thus, we have
     \begin{equation}
         \varliminf\limits_{r \to \infty}\frac{n_{F}(r)}{r} \ge
  \lim_{r \to \infty}\frac{n_{\Omega_0}(r)}{r} =
  \frac{2m}{\pi}.
     \end{equation}
     Using Lemma \ref{le1}, if the entire function $F(\lambda)\not\equiv0$, then
     \begin{equation}
         \frac{2m}{\pi}\le  \varliminf\limits_{r \to \infty}\frac{n_{F}}{r} \le \frac{1}{2\pi}\int_0^{2\pi}h_{F}(\theta)d\theta \le
  \frac{4b}{\pi},
     \end{equation}
     which implies $m\leq 2b$. This contradicts $m>2b$. Hence, $F(\lambda)\equiv0$, namely, $\frac{y'\left(\lambda,b\right)}{y\left(\lambda,b\right)}=\frac{\tilde{y}'\left(\lambda,b\right)}{\tilde{y}\left(\lambda,b\right)}$. Since  $y'\left(\lambda,b\right)$ and $y\left(\lambda,b\right)$ do not have common zeros, the zeros of ${y\left(\lambda,b\right)}$ are uniquely determined.  Using Theorem 3.2 in \cite{X}, we obtain that $q(x)=\tilde{q}(x)$ a.e. on $\left(0,b\right)$, $\alpha_{0}=\tilde{\alpha}_{0}$ and $\beta_{0}=\tilde{\beta}_{0}$. The proof is complete.
\end{proof}

\begin{corollary}\label{th7}
    If $q(x)$ is known a priori a.e. on $\left(b,a\right)$ with $b<\frac{a}{2}$ and $\alpha$ and $\beta$ are known, then the subset $\Omega:=\Omega_+$ (or $\Omega:=\Omega_-$) satisfying
    \begin{equation}
         n_{\Omega}(r)=\frac{2mr}{\pi}[1+o(1)],\quad r \to \infty,\quad  2b<m\le a,
    \end{equation}
     uniquely determines $\alpha_{0}$, $\beta_{0}$ and $q(x)$ a.e. on $\left(0,b\right)$.
\end{corollary}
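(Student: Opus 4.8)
The plan is to specialize the proof of Theorem \ref{th6} to a single one of the two functions $\Delta_+$ or $\Delta_-$. Suppose $\Omega=\Omega_+$, the case $\Omega=\Omega_-$ being identical with \eqref{o64} used in place of \eqref{e10}. I would take two problems $L(q,\alpha_0,\beta_0,\alpha,\beta)$ and $L(\tilde q,\tilde\alpha_0,\tilde\beta_0,\alpha,\beta)$ with $q=\tilde q$ a.e. on $(b,a)$, the same $\alpha,\beta$, and $\Omega_+=\tilde\Omega_+$ (counted with multiplicity), and aim to deduce $q=\tilde q$ a.e. on $(0,b)$, $\alpha_0=\tilde\alpha_0$, $\beta_0=\tilde\beta_0$. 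Exactly as in Theorem \ref{th6}, I set $F(\lambda):=y(\lambda,b)\tilde y'(\lambda,b)-\tilde y(\lambda,b)y'(\lambda,b)$; from \eqref{r2}--\eqref{k8} it satisfies $|F(\lambda)|\le C|\lambda|e^{2b|{\rm Im}\lambda|}$, so $h_F(\theta)\le 2b|\sin\theta|$ and $\int_0^{2\pi}h_F(\theta)\,d\theta\le 8b$. Since $q=\tilde q$ on $(b,a)$ and $(\alpha,\beta)$ is shared, the solution $\phi(\lambda,x)$ of \eqref{k1} with $\phi(\lambda,a)=1$, $\phi'(\lambda,a)=-(i\alpha\lambda+\beta)$ satisfies $\phi(\lambda,b)=\tilde\phi(\lambda,b)$ and $\phi'(\lambda,b)=\tilde\phi'(\lambda,b)$ for all $\lambda$.

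Next I would put $x=b$ in \eqref{e10} to get $\Delta_+(\lambda)=y'(\lambda,b)\phi(\lambda,b)-\phi'(\lambda,b)y(\lambda,b)$ and the analogous identity for $\tilde\Delta_+$, and eliminate $\phi'(\lambda,b)$, resp.\ $\phi(\lambda,b)$, to obtain
\begin{equation*}
F(\lambda)=\frac{y'(\lambda,b)\tilde\Delta_+(\lambda)-\tilde y'(\lambda,b)\Delta_+(\lambda)}{\phi'(\lambda,b)}=\frac{y(\lambda,b)\tilde\Delta_+(\lambda)-\tilde y(\lambda,b)\Delta_+(\lambda)}{\phi(\lambda,b)}.
\end{equation*}
Since $\phi(\lambda,b)$ and $\phi'(\lambda,b)$ never vanish simultaneously, every common zero of $\Delta_+$ and $\tilde\Delta_+$ (with multiplicity) is a zero of $F$; in particular all points of $\Omega=\Omega_+=\tilde\Omega_+$ are zeros of $F$, whence $\varliminf_{r\to\infty} n_F(r)/r\ge \lim_{r\to\infty} n_\Omega(r)/r=2m/\pi$. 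If $F\not\equiv0$, Lemma \ref{le1} gives $2m/\pi\le \tfrac1{2\pi}\int_0^{2\pi}h_F(\theta)\,d\theta\le 4b/\pi$, i.e.\ $m\le 2b$, contradicting $m>2b$. Hence $F\equiv0$, so $y'(\lambda,b)/y(\lambda,b)=\tilde y'(\lambda,b)/\tilde y(\lambda,b)$; as $y(\lambda,b)$ and $y'(\lambda,b)$ share no zero, the zeros of $y(\lambda,b)$ are determined, and Theorem 3.2 of \cite{X} then yields $q=\tilde q$ a.e.\ on $(0,b)$, $\alpha_0=\tilde\alpha_0$, $\beta_0=\tilde\beta_0$.

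There is no new analytic difficulty beyond Theorem \ref{th6}; the only thing to watch is the density bookkeeping. The point of the hypothesis $b<a/2$ is precisely that $F$ has exponential type $2b$, while a single sequence $\Omega_\pm$ has density $\sim a$: by Theorem \ref{th1} one has $m=a$ when $(\alpha_0-1)(1-\alpha)\ne0$, so the assumption $2b<m\le a$ is automatic in that case (and $m<a$ is allowed otherwise), and the conclusion $m\le 2b$ can be contradicted only when $2b<a$. In Theorem \ref{th6} one instead uses both spectra, which doubles the available density to $\sim 2a$ and thereby removes the restriction on $b$. Thus the main work is already contained in the proof of Theorem \ref{th6}, and the corollary is obtained by running that argument with $\Delta_+$ (or $\Delta_-$) alone.
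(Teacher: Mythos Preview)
Your proposal is correct and follows exactly the approach implicit in the paper: the corollary is stated without a separate proof because it is obtained by running the argument of Theorem~\ref{th6} with only one of the two representations \eqref{w3}--\eqref{w4} (for $\Delta_+$ alone, or with \eqref{o64} for $\Delta_-$ alone), the restriction $b<a/2$ compensating for the halved zero density. Your write-up makes this specialization explicit and the density bookkeeping remark is accurate.
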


In the above theorem, it needs $m$ to be greater than $2b$. In the following theorem, we consider the critical case $m=2b$. In this case, we assume $(\alpha_0-1)(1- \alpha)\ne0$.
Denote
\begin{equation}\label{xk1}
  g_\pm (\lambda):=\lambda[i\left(\alpha_{0}\pm \alpha\right)\cos{\lambda a}-\left(1\pm\alpha\alpha_{0}\right)\sin{\lambda a}].
\end{equation}
Let  $\{\mu_k^\pm\}$ be zeros of $g_\pm(\lambda)$, respectively. It is obvious that
\begin{equation}\label{o28}
    \begin{cases}&  \!\!\!\!\mu_{0}^\pm=0 ,\;
        \mu_{k}^\pm=\frac{\pi}{a}\left(\lvert k\rvert-\frac{1}{2}\right){\rm sgn}k+\frac{iP_0^\pm}{2a},\; k\in\mathbb{Z}_0,\ \text{if}\ \left(\alpha_{0}-1\right)\left(1-\alpha\right)>0, \\
      & \!\! \!\!\!\!  \mu_{-1}^\pm=0,\;\mu_{k}^\pm=\frac{\pi}{a}\left(\lvert k\rvert-1\right){\rm sgn}k+\frac{iP_0^\pm}{2a}, k\in\mathbb{Z}_{0}\setminus\{-1\} ,\ \text{if}\ \left(\alpha_{0}-1\right)\left(1-\alpha\right)<0.
    \end{cases}
\end{equation}
From Theorem \ref{th1}, we see that
\begin{equation*}
  \lambda_k^\pm =\mu_k^\pm +O(1/k),\quad k\to\infty.
\end{equation*}
\begin{lemma}[\textup{\cite[Proposition B.6]{FB}}]\label{lem2}
    Assume that $E_0(\rho)$  is an  entire function of order less than one. If $\lim_{\lvert t\rvert\to\infty, t\in\mathbb{R}}E_0(it)=0$, then $E_0(\rho)\equiv0$ on the whole complex plane.
\end{lemma}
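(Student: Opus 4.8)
The plan is to deduce this from the Phragm\'en--Lindel\"of principle applied to each of the two half-planes cut out by the imaginary axis, together with Liouville's theorem.

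First I would note that, since $E_0(\rho)$ is entire and hence continuous, and $E_0(it)\to0$ as $|t|\to\infty$, the function $E_0$ is bounded on the imaginary axis; put $M:=\sup_{t\in\mathbb{R}}|E_0(it)|<\infty$. Since $E_0$ has order $\rho_0<1$, fix $\varepsilon>0$ with $\gamma:=\rho_0+\varepsilon<1$, so that $|E_0(\rho)|\le C\,e^{|\rho|^{\gamma}}$ for some $C>0$ and all $\rho\in\mathbb{C}$. Now apply Phragm\'en--Lindel\"of on the closed right half-plane $\Pi_+:=\{\rho:{\rm Re}\,\rho\ge0\}$, a sector of opening $\pi$: on its boundary $i\mathbb{R}$ we have $|E_0|\le M$, while inside $\Pi_+$ the growth $e^{|\rho|^{\gamma}}$ with $\gamma<1$ stays strictly below the critical rate $|\rho|^{1}$ for an opening of $\pi$; hence $|E_0|\le M$ throughout $\Pi_+$. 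The same argument on the closed left half-plane $\Pi_-:=\{\rho:{\rm Re}\,\rho\le0\}$ gives $|E_0|\le M$ on $\Pi_-$ as well. Therefore $|E_0|\le M$ on all of $\mathbb{C}$, so $E_0$ is constant by Liouville's theorem; evaluating along the imaginary axis, the constant equals $\lim_{|t|\to\infty}E_0(it)=0$, whence $E_0\equiv0$.

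The only point requiring care is matching the growth hypothesis to the half-plane Phragm\'en--Lindel\"of theorem: for a sector of opening $\beta$ the critical growth is $|\rho|^{\pi/\beta}$, so for a half-plane ($\beta=\pi$) it is exactly $|\rho|^{1}$, and ``order strictly less than one'' is precisely the condition that keeps $E_0$ subcritical there. Everything else---the boundedness on $i\mathbb{R}$, the two symmetric applications, and the final Liouville step---is routine; since the statement is quoted verbatim from \cite[Proposition B.6]{FB}, one may alternatively simply cite it.
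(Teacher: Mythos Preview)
Your proof is correct: the Phragm\'en--Lindel\"of argument on the two half-planes bounded by the imaginary axis, followed by Liouville, is exactly the standard route, and you have identified the key point that order strictly less than one is precisely the subcritical growth condition for a sector of opening $\pi$. The paper itself does not supply a proof of this lemma at all---it simply quotes the statement from \cite[Proposition~B.6]{FB}---so your argument is a genuine addition rather than a comparison target, and (as you note) one could alternatively just cite the reference.
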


\begin{theorem}\label{thxx}
    Assume that $(\alpha_0-1)(1-\alpha)\ne0$, and $q(x)$ is known a priori a.e. on $(b,a)$ with $b\in (0,a)$ and $\alpha$ and $\beta$ are known. If the subset $\Omega_{1}:=\{\lambda_{k_{j}}^+\}_{j\in \mathbb{Z}_1}\cup \{\lambda_{k_{j}}^-\}_{j\in \mathbb{Z}_1}$ satisfies
    \begin{equation}\label{i1}
        \sum_{j\in \mathbb{Z}_1}\frac{\lvert\lambda_{k_{j}}^+-a\mu_{j}^+/b_+\rvert}{\lvert j\rvert+1}+\sum_{j\in \mathbb{Z}_1}\frac{\lvert\lambda_{k_{j}}^--a\mu_{j}^-/b_-\rvert}{\lvert j\rvert+1}<\infty,\quad b_++b_-=2b,
    \end{equation}
  where $\mathbb{Z}_1=\mathbb{Z}_0$ if $(\alpha_0-1)(1- \alpha)<0$ and $\mathbb{Z}_1=\mathbb{Z}$ if $(\alpha_0-1)(1- \alpha)>0$,  then $\Omega_{1}$ uniquely determines $\alpha_{0},\beta_{0}$ and $q(x)$ a.e. on $(0,b)$.
\end{theorem}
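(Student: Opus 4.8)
The plan is to follow the architecture of the proof of Theorem~\ref{th6}, but to replace the zero-counting estimate (Lemma~\ref{le1}), which only covers $m>2b$, by a Phragm\'en--Lindel\"of argument that is sharp at the critical value. Suppose $L(q,\alpha_0,\beta_0,\alpha,\beta)$ and $L(\tilde q,\tilde\alpha_0,\tilde\beta_0,\alpha,\beta)$ satisfy $q=\tilde q$ a.e. on $(b,a)$ and share the subset $\Omega_1$, the equality respecting the $\pm$ labels, so that each $\lambda_{k_j}^+$ is a common zero of $\Delta_+$ and $\tilde\Delta_+$ and each $\lambda_{k_j}^-$ a common zero of $\Delta_-$ and $\tilde\Delta_-$. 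I would define $F$ by \eqref{p1}; by \eqref{p9}, $F$ is entire with $|F(\lambda)|\le C|\lambda|e^{2b|{\rm Im}\,\lambda|}$, hence of exponential type $\le 2b$ with indicator $h_F(\theta)\le 2b|\sin\theta|$. Since $q=\tilde q$ on $(b,a)$ and the right boundary data coincide, $\phi(\lambda,b)=\tilde\phi(\lambda,b)$ and $\phi'(\lambda,b)=\tilde\phi'(\lambda,b)$ for all $\lambda$, so the elimination carried out in \eqref{w1}--\eqref{w4} shows that every point of $\Omega_1$ is a zero of $F$ with (at least) the multiplicity it carries in $\Omega_1$. (If $\lambda=0$ is an eigenvalue it is shared, since $\Delta_+(0)=\Delta_-(0)$ by \eqref{k9}--\eqref{e15}, and is absorbed by a routine separate check; for brevity I assume below that $0$ is not an eigenvalue.)

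The key construction is a model function carrying exactly the zeros $\Omega_1$. Set $G_\pm(\lambda):=g_\pm(b_\pm\lambda/a)$, with $g_\pm$ from \eqref{xk1}. By \eqref{o28}, $G_\pm$ has zero set $\{a\mu_j^\pm/b_\pm\}_{j\in\mathbb{Z}_1}$, exponential type $b_\pm$, and along the imaginary axis
\begin{equation*}
|G_\pm(it)|\sim \frac{b_\pm}{2a}\,|t|\,e^{b_\pm|t|}\cdot\begin{cases}\,\big|(\alpha_0-1)(1-\alpha)\big|,& t\to+\infty,\\[2pt] \,(1+\alpha_0)(1+\alpha),& t\to-\infty,\end{cases}
\end{equation*}
\emph{both} coefficients being nonzero precisely because $(\alpha_0-1)(1-\alpha)\ne0$ --- this is where that hypothesis enters. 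Let $P_\pm(\lambda)$ be the canonical products of genus one over $\{\lambda_{k_j}^\pm\}_{j\in\mathbb{Z}_1}$. Using the weighted $\ell^1$-closeness \eqref{i1} and standard perturbation estimates for infinite products (see, e.g., \cite{B} or the appendix of \cite{FB}), $P_\pm$ is again of exponential type $b_\pm$, and $P_\pm/G_\pm$ --- meromorphic, with zeros and poles confined to a horizontal strip --- satisfies $c\le|P_\pm(it)/G_\pm(it)|\le C$ for $|t|$ large, so that $c'|t|e^{b_\pm|t|}\le|P_\pm(it)|\le C'|t|e^{b_\pm|t|}$ as $|t|\to\infty$, with $c,c',C,C'>0$. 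I expect this step --- converting \eqref{i1} into uniform two-sided bounds on $i\mathbb{R}$ while keeping the exponential type exactly $b_\pm$, with no exponential drift --- to be the main technical obstacle.

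With these bounds the argument closes. The zeros of $P_+P_-$ are zeros of $F$ with at least the same multiplicities, so $E_0(\lambda):=F(\lambda)/\big(P_+(\lambda)P_-(\lambda)\big)$ is entire. Since $G_\pm$, hence $P_\pm$, has completely regular growth with indicator $b_\pm|\sin\theta|$, we get $h_{E_0}(\theta)\le h_F(\theta)-(b_++b_-)|\sin\theta|=h_F(\theta)-2b|\sin\theta|\le0$, so $E_0$ is of exponential type $\le0$; and
\begin{equation*}
|E_0(it)|\le\frac{C|t|e^{2b|t|}}{c'^{\,2}|t|^2e^{(b_++b_-)|t|}}=\frac{C}{c'^{\,2}|t|}\longrightarrow0,\qquad|t|\to\infty.
\end{equation*}
Thus $E_0$ is of order at most one and vanishes at infinity along $i\mathbb{R}$, so Lemma~\ref{lem2} (equivalently, Phragm\'en--Lindel\"of on the half-planes $\pm{\rm Re}\,\lambda>0$) gives $E_0\equiv0$, i.e. $F\equiv0$.

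Finally, $F\equiv0$ means $y'(\lambda,b)/y(\lambda,b)\equiv\tilde y'(\lambda,b)/\tilde y(\lambda,b)$ as meromorphic functions; since $y(\lambda,b)$ and $y'(\lambda,b)$ have no common zero (a nontrivial solution of \eqref{k1} cannot vanish together with its derivative), the zeros of $y(\lambda,b)$ coincide, with multiplicity, with those of $\tilde y(\lambda,b)$. Now $y(\lambda,b)$ is the characteristic function of \eqref{k1} on $(0,b)$ with the boundary condition \eqref{k2} and $y(\lambda,b)=0$ --- a problem with a single generalized Regge condition --- so Theorem~3.2 of \cite{X} yields $q=\tilde q$ a.e. on $(0,b)$, $\alpha_0=\tilde\alpha_0$, $\beta_0=\tilde\beta_0$, as claimed. (One may also observe that \eqref{i1} already forces $b_\pm$ and $P_0^\pm$, equivalently the leading real and imaginary parts of the two subsequences, to be common to the two problems, which is used implicitly when matching $\Omega_1$ with $\tilde\Omega_1$ index by index.)
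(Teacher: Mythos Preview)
Your approach is essentially correct and reaches the same conclusion, but the paper takes a different and somewhat cleaner route. The paper passes to the variable $\rho=\lambda^2$: it sets $G(\rho)=F(\lambda)F(-\lambda)$, builds the genus-zero product $\Phi(\rho)=\prod_j\bigl(1-\rho/(\lambda_{k_j}^+)^2\bigr)\bigl(1-\rho/(\lambda_{k_j}^-)^2\bigr)$, and shows $E_0(\rho)=G(\rho)/\Phi(\rho)$ is entire of order $\le 1/2$. It then compares $\Phi$ to the model $\Phi_0(\rho)=g_+\!\bigl(\tfrac{b_+\lambda}{a}\bigr)g_+\!\bigl(-\tfrac{b_+\lambda}{a}\bigr)g_-\!\bigl(\tfrac{b_-\lambda}{a}\bigr)g_-\!\bigl(-\tfrac{b_-\lambda}{a}\bigr)$ to obtain $|\Phi_0(it)/\Phi(it)|=O(1)$ and hence $|E_0(it)|=O(1/|t|)$, so Lemma~\ref{lem2} (order $<1$) applies directly. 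By contrast, you stay in the $\lambda$-plane with genus-one products, where your quotient $E_0(\lambda)$ has order at most one; note that Lemma~\ref{lem2} as stated does \emph{not} cover this case, so your parenthetical Phragm\'en--Lindel\"of argument is the one actually doing the work, and it in turn rests on the indicator inequality $h_{E_0}(\theta)\le 0$, i.e.\ on completely regular growth of $P_\pm$ in every direction. That is achievable but, as you anticipate, is the real labour---and the genus-one exponential factors can produce a drift $e^{c\lambda}$ that must be shown harmless on $i\mathbb R$. The $\rho=\lambda^2$ substitution sidesteps both issues at once: it drops the order below $1$, kills the Weierstrass exponentials, and reduces the needed comparison to a single one-sided bound on the imaginary $\rho$-axis.
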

\begin{proof}
   We only deal with the case $(\alpha_0-1)(1- \alpha)>0$, i.e., $\mathbb{Z}_1=\mathbb{Z}$. Let $\rho=\lambda^{2}$. 
   Define
    \begin{equation}\label{p2}
        G(\rho)=F(\lambda)F(-\lambda),\quad
        \Phi(\rho)=\prod_{j\in \mathbb{Z}}\left(1-\frac{\rho}{(\lambda_{k_{j}}^+)^{2}}\right)\left(1-\frac{\rho}{(\lambda_{k_{j}}^-)^{2}}\right),
  \quad
        E_0(\rho)=\frac{G(\rho)}{\Phi(\rho)},
    \end{equation}
    where $F$ is defined in \eqref{p1}. Since $F(\lambda)$ is an entire function of order $\le1$ and $F(\lambda)F(-\lambda)$ is an even function of $\lambda$, then $G(\rho)$  is an entire function of $\rho $ of  order $\le\frac{1}{2}$.  Let us show that $E_0(\rho)$  is also an entire function of $\rho$ of order $\le\frac{1}{2}$.  Since all zeros of $\Phi(\rho) $ are zeros of $G(\rho)$, it is enough to show that $\Phi(\rho) $ is an entire function of $\rho $ of  order $\le\frac{1}{2}$.
    By virtue of \eqref{k11} and \eqref{k12}, we have
    \begin{equation}\label{4.26}
        \frac{1}{(\lambda_{k_{j}}^\pm)^{2}}=O\left(\frac{1}{j^2}\right), \quad j\to\infty.
    \end{equation}
    It follows that the series
    \begin{equation}
    \notag
        \sum_{j\in \mathbb{Z}}\left(\left\lvert \frac{\rho}{(\lambda_{k_{j}}^+)^{2}} \right\rvert+\left\lvert \frac{\rho}{(\lambda_{k_{j}}^-)^{2}}\right\rvert\right)
    \end{equation}
    converges uniformly on bounded subsets of $\mathbb{C}$. Consequently, the infinite product $\Phi(\rho)$ in \eqref{p2} converges to an entire function of $\rho$, with its roots being exactly $(\lambda_{k_{j}}^+)^{2}$ and $(\lambda_{k_{j}}^-)^{2}$, $j\in \mathbb{Z}$. Denote
    \begin{equation}
        \Xi_\Phi:=\inf \left\{p: \sum_{j\in \mathbb{Z}}\left(\frac{1}{\left \lvert(\lambda_{k_{j}}^+)^{2p}\right\rvert} +\frac{1}{\left\lvert (\lambda_{k_{j}}^-)^{2p}\right\rvert}\right)<\infty\right\},
    \end{equation}
    which is called \emph{convergence exponent of zeros} of the canonical product of $\Phi(\rho)$ in \eqref{p2}. Obviously,  the estimate  \eqref{4.26} implies $\Xi_\Phi\le\frac{1}{2}$. Note that the order of canonical product of an entire function coincides with its convergence exponent of zeros (see \cite[p.16]{B}). It follows  that the order of canonical
product of $\Phi(\rho)$
 is less than or equal to $1/2$. Using Hadamard's factorization theorem, we know that the infinite product in \eqref{p2} is
the canonical product of the function
$\Phi(\rho)$, and so the order of $\Phi(\rho)$
 is at most $1/2$.

    Using the Lemma \ref{lem2} and the end part of proof of Theorem \ref{th6}, to prove Theorem \ref{thxx}, it is enough to prove
    \begin{equation}
        \lim_{\lvert t\rvert\to\infty,t\in\mathbb{R}}E_0(it)=0.
    \end{equation}
        Define
        \begin{equation}
            \Phi_{0}\left(\rho\right)=g_+\left(\frac{b_+\lambda}{a}\right)g_+\left(-\frac{b_+\lambda}{a}\right)g_-\left(\frac{b_-\lambda}{a}\right)
            g_-\left(-\frac{b_-\lambda}{a}\right),
        \end{equation}
        where the functions $g_\pm$ are defined in \eqref{xk1}.
        Using the condition $d_++d_-=2b$, we have that for large $|t|$, there holds
        \begin{equation}\label{xk4}
            \lvert\Phi_{0}\left(it\right)\rvert\geq C\lvert t\rvert^{2}e^{4b\lvert {\rm Im} \sqrt{t}\rvert},\quad C>0.
        \end{equation}
      Using the Hadamard's factorization theorem, we can also have
        \begin{equation}
            \Phi_{0}\left(\rho\right)=C_1\rho^2\prod_{j\in \mathbb{Z}_0}\left(1-\frac{\rho}{(\zeta_{j}^+)^{2}}\right)\left(1-\frac{\rho}{(\zeta_{j}^-)^{2}}\right),
        \end{equation}
        where $\zeta_{j}^\pm=\frac{a}{b_\pm}\mu_{j}^\pm$ and $C_1$ is a constant. 
         Then
        \begin{align*}
            \left |\frac{\Phi_{0}\left(it\right)}{\Phi\left(it\right)}\right|\notag&\le C\prod_{j\in \mathbb{Z}_0}\frac{\left(1-\frac{it}{(\zeta_{j}^+)^{2}}\right)\left(1-\frac{it}{(\zeta_{j}^-)^{2}}\right)}{\left(1-\frac{it}{(\lambda_{k_{j}}^+)^{2}}\right)\left(1-\frac{it}{(\lambda_{k_{j}}^-)^{2}}\right)}\\
            &\leq C\!\!\prod_{j\in \mathbb{Z}_0}\!\!\left[1+\left|\frac{(\zeta_{j}^+)^{2}-(\lambda_{k_{j}}^+)^{2}}{(\lambda_{k_{j}}^+)^{2}-it}\right|\right]\!\!\left[1+\left|\frac{(\zeta_{j}^-)^{2}-(\lambda_{k_{j}}^-)^{2}}{(\lambda_{k_{j}}^-)^{2}-it}\right|\right]\prod_{j\in \mathbb{Z}_0}\left|\frac{(\lambda_{k_{j}}^+)^{2}}{(\zeta_{j}^+)^{2}}\right|\left|\frac{(\lambda_{k_{j}}^-)^{2}}{(\zeta_{j}^-)^{2}}\right|,
        \end{align*}
    which implies from \eqref{i1} that
\begin{equation}\label{xk6}
    \left |\frac{\Phi_{0}\left(it\right)}{\Phi\left(it\right)}\right|=O(1), \quad\lvert t\rvert\to  \infty.
\end{equation}
Using \eqref{p9}, \eqref{p2}, \eqref{xk4} and \eqref{xk6}, we get
\begin{equation}
    \lvert E_0\left(it\right)\rvert=\left|\frac{G(it)\Phi_{0}(it)}{\Phi_{0}(it)\Phi(it)}\right|=O\left(\frac{1}{\lvert t\rvert}\right), \quad\lvert t\rvert\to \infty.
\end{equation}.\\
The proof is complete.
\end{proof}
\begin{corollary}\label{cor2}
   Assume that $(\alpha_0-1)(1-\alpha)\ne0$. If $q(x)$ is known a priori a.e. on $(b,a)$ with $b<\frac{a}{2}$, and $\alpha$,$\beta$ are known, then the subset  $\{\lambda_{k_{j}}^+\}_{j\in \mathbb{Z}_1}$ (or $ \{\lambda_{k_{j}}^-\}_{j\in \mathbb{Z}_1}$) satisfying
     \begin{equation*}
        \sum_{j\in \mathbb{Z}_1}\frac{\lvert\lambda_{k_{j}}^+-\frac{a\mu_{j}^+}{2b}\rvert}{\lvert j\rvert+1}<\infty,\quad \left(\text{or}\quad \sum_{j\in \mathbb{Z}_1}\frac{\lvert\lambda_{k_{j}}^--\frac{a\mu_{j}^-}{2b}\rvert}{\lvert j\rvert+1}<\infty,\right)
    \end{equation*}
     uniquely determines $\alpha_{0}$, $\beta_{0}$ and $q(x)$ a.e. on $\left(0,b\right)$.
\end{corollary}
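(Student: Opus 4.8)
The corollary is the special case $b_+=b_-=2b$ of Theorem \ref{thxx}, together with the observation that the second part of the spectrum can be dropped when $b<\tfrac a2$. The plan is therefore to reduce it to Theorem \ref{thxx} by a one-sided argument analogous to how Corollary \ref{th7} follows from Theorem \ref{th6}. First I would set up the two problems $L(q,\alpha_0,\beta_0,\alpha,\beta)$ and $L(\tilde q,\tilde\alpha_0,\tilde\beta_0,\alpha,\beta)$ agreeing on $(b,a)$ and on $\alpha,\beta$, and define $F(\lambda)$ as in \eqref{p1}. As in the proof of Theorem \ref{thxx}, one works with $\rho=\lambda^2$, forms $G(\rho)=F(\lambda)F(-\lambda)$ and the canonical product $\Phi(\rho)$, but now built only from the single sequence $\{\lambda_{k_j}^{+}\}_{j\in\mathbb Z_1}$ (or $\{\lambda_{k_j}^{-}\}$). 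The estimate \eqref{4.26} still gives $\Xi_\Phi\le\tfrac12$, so $E_0(\rho)=G(\rho)/\Phi(\rho)$ is entire of order $\le\tfrac12$, exactly as before.

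Next I would show $E_0(it)\to0$ as $|t|\to\infty$. Here the comparison product is $\Phi_0(\rho)=g_+\!\bigl(\tfrac{2b\lambda}{a}\bigr)g_+\!\bigl(-\tfrac{2b\lambda}{a}\bigr)$, whose zeros are $\zeta_j^{+}=\tfrac{a}{2b}\mu_j^{+}$. Because the single sequence now has density $2b/\pi$ rather than $2b_+/\pi$ per sequence with $b_++b_-=2b$, the bound \eqref{xk4} becomes $|\Phi_0(it)|\ge C|t|\,e^{2b|{\rm Im}\sqrt t\,|}$, while $F(\lambda)$ still satisfies $|F(\lambda)|\le C|\lambda|e^{2b|{\rm Im}\lambda|}$ from \eqref{p9}, so $|G(it)|\le C|t|\,e^{2b|{\rm Im}\sqrt t\,|}$. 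The hypothesis $\sum_j |\lambda_{k_j}^{+}-\tfrac{a\mu_j^{+}}{2b}|/(|j|+1)<\infty$ gives, exactly as in \eqref{xk6}, that $|\Phi_0(it)/\Phi(it)|=O(1)$; hence $|E_0(it)|=|G(it)\Phi_0(it)/(\Phi_0(it)\Phi(it))|=O(1/|t|)\to0$. By Lemma \ref{lem2}, $E_0\equiv0$, so $G\equiv0$ and therefore $F\equiv0$.

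From $F\equiv0$ one gets $y'(\lambda,b)/y(\lambda,b)=\tilde y'(\lambda,b)/\tilde y(\lambda,b)$, so the zeros of $y(\lambda,b)$ are determined, and then Theorem 3.2 in \cite{X} yields $q=\tilde q$ a.e.\ on $(0,b)$, $\alpha_0=\tilde\alpha_0$, $\beta_0=\tilde\beta_0$. The only genuinely new point, compared with Theorem \ref{thxx}, is checking that a \emph{single} sequence of exponent $2b<m\le a$ already forces the required density inequality for the argument to run; but since $b<\tfrac a2$ we have $2b<a$, and the asymptotics of $\{\lambda_k^{\pm}\}$ from Theorem \ref{th1} show each individual sequence has counting function $n(r)=\tfrac{2ar}{\pi}(1+o(1))$ when $(\alpha_0-1)(1-\alpha)\ne0$, comfortably exceeding $\tfrac{4b r}{\pi}$. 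The main obstacle is purely bookkeeping: making sure the order-$\le\tfrac12$ claim for $\Phi$ and the $O(1)$ bound for $\Phi_0/\Phi$ survive when only one sequence is used, which they do because the condition \eqref{i1} with $b_+=b_-=2b$ is precisely the one-sequence condition stated in the corollary. No step here is harder than the corresponding step in Theorem \ref{thxx}, so the proof is a direct specialization.
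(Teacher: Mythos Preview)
Your plan is the natural one and is evidently what the paper has in mind: the corollary is stated without proof immediately after Theorem \ref{thxx}, and the intended argument is exactly the single-sequence specialization you describe (take $b_+=2b$ and drop the second family). So strategically you are aligned with the paper.

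There is, however, a genuine bookkeeping gap in your decay estimate for $E_0(it)$. With the one-sequence comparison product
\[
\Phi_0(\rho)=g_+\!\Big(\tfrac{2b\lambda}{a}\Big)\,g_+\!\Big(-\tfrac{2b\lambda}{a}\Big),
\]
each factor behaves like $|\lambda|\,e^{2b|{\rm Im}\lambda|}$, so $|\Phi_0(it)|\ge C\,|t|\,e^{4b|{\rm Im}\sqrt{it}|}$ (exponent $4b$, not $2b$; and only one power of $|t|$, not $|t|^2$ as in \eqref{xk4}). Likewise, from \eqref{p9} one gets $|G(it)|=|F(\lambda)F(-\lambda)|\le C\,|t|\,e^{4b|{\rm Im}\sqrt{it}|}$. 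Hence
\[
\frac{|G(it)|}{|\Phi_0(it)|}=O(1),\qquad \text{so}\qquad |E_0(it)|=O(1),
\]
not $O(1/|t|)$ as you claim. In the two-sequence proof the extra $|t|$ in $|\Phi_0(it)|$ (coming from four factors of $g_\pm$, i.e.\ $|\lambda|^4=|t|^2$) is precisely what produces the decay; with one sequence that factor is lost, and Lemma \ref{lem2} does not apply on the nose.

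The gap is easily repaired by sharpening \eqref{p9}. A direct computation from \eqref{r2}--\eqref{k8} shows that the principal parts of $y,\tilde y,y',\tilde y'$ give
\[
F(\lambda)=i\lambda(\tilde\alpha_0-\alpha_0)\bigl(\cos^2\lambda b+\sin^2\lambda b\bigr)+O\!\left(e^{2b|{\rm Im}\lambda|}\right)
=i\lambda(\tilde\alpha_0-\alpha_0)+O\!\left(e^{2b|{\rm Im}\lambda|}\right),
\]
so the $|\lambda|$-term carries no exponential growth. It follows that
\[
|G(it)|\le C\Big(|t|+\sqrt{|t|}\,e^{2b|{\rm Im}\sqrt{it}|}+e^{4b|{\rm Im}\sqrt{it}|}\Big)\le C\,e^{4b|{\rm Im}\sqrt{it}|},
\]
and now $|G(it)|/|\Phi_0(it)|=O(1/|t|)$, whence $|E_0(it)|=O(1/|t|)\to 0$ and Lemma \ref{lem2} applies. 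With this correction your argument goes through; the remaining steps ($F\equiv 0\Rightarrow y'/y=\tilde y'/\tilde y$ at $x=b$, then \cite{X}) are exactly as in Theorem \ref{thxx}. Your final paragraph conflating this with a density argument \`a la Corollary \ref{th7} is unnecessary and somewhat misleading: the condition here fixes the density of the subsequence at exactly $2b/\pi$, which is the critical case, so no strict inequality $m>2b$ is available.
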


\noindent\textbf{\large Appendix}
\\[-2mm]

\appendix
\setcounter{equation}{0}
\renewcommand\theequation{A.\arabic{equation}}
\setcounter{lemma}{0}
\renewcommand\thelemma{A.\arabic{lemma}}

In Appendix, we give the asymptotics of zeros of the entire function
    \begin{equation}\label{A.7}
        \varphi(\lambda)=\phi^{(1)}(\lambda)+M \sin{\lambda a}-iN \cos{\lambda a}+\psi(\lambda),
    \end{equation}
where $M,\ N\in\mathbb{C}$, $\psi$ is an entire function of exponential type $\leq a$ and belongs to $L^2(-\infty,\infty)$ and
    \begin{equation}\label{A.1}
        \phi^{(1)}(\lambda)=\lambda(\sigma_1\cos{\lambda a}+i\sigma_2\sin{\lambda a}),
    \end{equation}
with $\sigma_1,\sigma_2\in \mathbb{R}$. In \cite[p.178]{MV}, the asymptotics of zeros of $  \varphi(\lambda)$ is given for the condition $\sigma_1\ne\sigma_2$ and $\sigma_1,\sigma_2\ge0$. Indeed, this condition can be weaker. Let us first prove the following lemma.

\begin{lemma}\label{LA1}
    Let $\sigma_1$, $\sigma_2$ be real numbers with $\sigma_1\ne\pm\sigma_2$. Then the zeros of the entire function $\phi^{(1)}(\lambda)$  defined in \eqref{A.1}
    have the following asymptotic representations.\\
    $\left(1\right)$ If $\left(\sigma_2-\sigma_1\right)\left(\sigma_2+\sigma_1\right)<0$, then the zeros $\{\lambda_{k}^{(1)}\}_{k\in\mathbb{Z}}$ of $\phi^{(1)}(\lambda)$ are $\lambda_{0}^{(1)}=0$,
   \begin{equation}\label{XA1}
   \begin{aligned}
      \lambda_{k}^{(1)}=\frac{\pi}{a}\left(\lvert k\rvert-\frac{1}{2}\right){\rm sgn} k+\frac{i}{2a}\ln{\left(\frac{|\sigma_2+\sigma_1|}{|\sigma_2-\sigma_1|}\right)}, \quad k\in\mathbb{Z}_0,\quad \mathbb{Z}_0=\mathbb{Z}\setminus\{0\},
   \end{aligned}
   \end{equation}
   $\left(2\right)$ If $\left(\sigma_2-\sigma_1\right)\left(\sigma_2+\sigma_1\right)>0$, then the zeros $\{\lambda_{k}^{(1)}\}_{k\in \mathbb{Z}_0}$ of $\phi^{(1)}(\lambda)$ are $\lambda_{-1}^{(1)}=0$:
   \begin{equation}\label{XA2}
   \begin{aligned}
      \lambda_{k}^{(1)}=\frac{\pi}{a}\left(\lvert k\rvert-1\right){\rm sgn} k+\frac{i}{2a}\ln{\left(\frac{|\sigma_2+\sigma_1|}{|\sigma_2-\sigma_1|}\right)}, \quad k\in \mathbb{Z}_0\setminus\{-1\}.
   \end{aligned}
   \end{equation}
\end{lemma}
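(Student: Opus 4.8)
The plan is to compute the zeros of $\phi^{(1)}$ directly; since $\phi^{(1)}$ is a pure exponential polynomial the representations \eqref{XA1}--\eqref{XA2} will in fact be \emph{exact}, and the whole task reduces to solving one exponential equation and then bookkeeping the index set. First I would rewrite $\phi^{(1)}$ in \eqref{A.1} by Euler's formulas:
\[
\phi^{(1)}(\lambda)=\frac{\lambda}{2}\Big[(\sigma_1+\sigma_2)e^{i\lambda a}+(\sigma_1-\sigma_2)e^{-i\lambda a}\Big].
\]
Thus $\lambda=0$ is always a zero, and every other zero solves $e^{2i\lambda a}=r$ where $r:=\dfrac{\sigma_2-\sigma_1}{\sigma_1+\sigma_2}$. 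The hypothesis $\sigma_1\neq\pm\sigma_2$ guarantees $\sigma_1+\sigma_2\neq0$ and $\sigma_2-\sigma_1\neq0$, so $r$ is a well-defined nonzero real number; moreover $r(\sigma_1+\sigma_2)^2=\sigma_2^2-\sigma_1^2$, so $r<0$ under the hypothesis of part (1) and $r>0$ under that of part (2).

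Next I would solve $e^{2i\lambda a}=r$ by writing $\lambda=\mu+i\nu$ with $\mu,\nu\in\mathbb{R}$. Comparing moduli gives $e^{-2\nu a}=|r|=\dfrac{|\sigma_2-\sigma_1|}{|\sigma_1+\sigma_2|}$ in both parts, hence
\[
\nu=\frac{1}{2a}\ln\frac{|\sigma_2+\sigma_1|}{|\sigma_2-\sigma_1|},
\]
which is precisely the imaginary part occurring in \eqref{XA1} and \eqref{XA2}. Comparing arguments: in part (1), $r<0$ forces $e^{2i\mu a}=-1$, so $\mu\in\big\{\tfrac{\pi}{2a}(2n+1):n\in\mathbb{Z}\big\}$; in part (2), $r>0$ forces $e^{2i\mu a}=1$, so $\mu\in\big\{\tfrac{\pi n}{a}:n\in\mathbb{Z}\big\}$. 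Together with the zero $\lambda=0$ this gives the complete zero set of $\phi^{(1)}$ in each case.

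It remains to relabel these zeros to match the ${\rm sgn}$-indexed formulas. In part (1) the nonzero zeros are $\tfrac{\pi}{2a}(2n+1)+i\nu$, $n\in\mathbb{Z}$; setting $n=k-1$ for $k\ge1$ and $n=k$ for $k\le-1$ converts the real part into $\tfrac{\pi}{a}\big(|k|-\tfrac12\big){\rm sgn}\,k$, and assigning $\lambda=0$ to the slot $k=0$ produces \eqref{XA1}. In part (2) the nonzero zeros are $\tfrac{\pi n}{a}+i\nu$, $n\in\mathbb{Z}$; setting $n=k-1$ for $k\ge1$ and $n=k+1$ for $k\le-2$ converts the real part into $\tfrac{\pi}{a}\big(|k|-1\big){\rm sgn}\,k$, and assigning $\lambda=0$ to the slot $k=-1$ produces \eqref{XA2}. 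The only point deserving care is this bookkeeping: one must check that each relabelling is a bijection onto $\mathbb{Z}$, that the ``extra'' zero $\lambda=0$ coming from the leading factor occupies the stated index, and that multiplicities are respected — the last being an issue precisely in part (2) when $\sigma_1=0$, in which case $\lambda=0$ is a double zero and is correctly accounted for by the two slots $k=-1$ and $k=1$. There is no genuine analytic obstacle beyond this.
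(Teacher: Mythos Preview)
Your proof is correct and follows essentially the same approach as the paper: both rewrite $\phi^{(1)}$ via Euler's formulas, reduce to $e^{2i\lambda a}=\dfrac{\sigma_2-\sigma_1}{\sigma_2+\sigma_1}$, and split into cases according to the sign of that real ratio. Your version is in fact a bit more thorough, since you make the relabelling bijection explicit and note the double zero at $\lambda=0$ when $\sigma_1=0$ (a point the paper does not comment on).
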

\begin{proof}
Note that
\begin{equation}\label{A7}
    \cos{\lambda a}=\frac{e^{i\lambda a}+e^{-i\lambda a}}{2}, \quad \sin{\lambda a}=\frac{e^{i\lambda a}-e^{-i\lambda a}}{2i}.
\end{equation}
Then $\frac{1}{\lambda}\phi^{(1)}(\lambda)=0$ is equivalent to
\begin{equation}\label{A.5}
    \left(\sigma_2+\sigma_1\right)e^{i\lambda a}=\left(\sigma_2-\sigma_1\right)e^{-i\lambda a}.
\end{equation}
Since $\sigma_1\ne\pm \sigma_2$, it follows that
\begin{equation}\label{A11}
    e^{2i\lambda a}=\frac{\sigma_2-\sigma_1}{\sigma_2+\sigma_1}.
\end{equation}
Taking the logarithm on both sides of \eqref{A11}, we have
\begin{equation*}
  2i\lambda_k^{(1)} a=\ln{\left(\frac{\sigma_2-\sigma_1}{\sigma_2+\sigma_1}\right)}+2k\pi i,\quad k\in \mathbb{Z}.
\end{equation*}
Note that $\ln z=\ln |z|+i\arg z$ with $\arg z\in (-\pi,\pi]$. It follows that
\begin{equation}
    \lambda_{k}^{(1)}=\begin{cases}&  \!\!\!\!
        \frac{k\pi}{a}+\frac{i}{2a}\ln{\left(\frac{\sigma_2+\sigma_1}{\sigma_2-\sigma_1}\right)},\; k\in\mathbb{Z},\ \text{if}\ \frac{\sigma_2+\sigma_1}{\sigma_2-\sigma_1}>0, \\
      & \!\! \!\!\!\!  \frac{\pi}{a}\left(k-\frac{1}{2}\right)+\frac{i}{2a}\ln{\left(\frac{|\sigma_2+\sigma_1|}{|\sigma_2-\sigma_1|}\right)},\; k\in\mathbb{Z},\ \text{if}\ \frac{\sigma_2+\sigma_1}{\sigma_2-\sigma_1}<0,
    \end{cases}
\end{equation}
which is equivalent to \eqref{XA1} and \eqref{XA2}. The proof is complete.
\end{proof}
Using Lemma \ref{LA1} and a similar argument to the proof of Lemma 7.1.3 in \cite{MV}, one can get the following lemma.
\begin{lemma}\label{LA2}
Let $\sigma_1$ and $\sigma_2$ be real  numbers with $\sigma_1\ne\pm\sigma_2$, and the entire function $ \varphi(\lambda)$ is defined in \eqref{A.7}.\\
$\left(1\right)$ If $\left(\sigma_2-\sigma_1\right)\left(\sigma_2+\sigma_1\right)<0$, then the zeros $\{\lambda_{k}\}_{k\in\mathbb{Z}}$ of $\varphi(\lambda)$ have the following asymptotic behavior:
   \begin{equation}
   \begin{aligned}
      \lambda_{k}=\frac{\pi}{a}\left(\lvert k\rvert-\frac{1}{2}\right){\rm sgn} k+\frac{i}{2a}\ln{\left(\frac{|\sigma_2+\sigma_1|}{|\sigma_2-\sigma_1|}\right)}+\frac{P}{k}+\frac{\gamma_k}{k}, \quad  \lvert k\rvert\to\infty,
   \end{aligned}
   \end{equation}
   where $\{\gamma_{k}\}_{k=-\infty,k\neq0}^{\infty}\in l_{2}$ and
   \begin{equation}\label{A.9}
       P=\frac{\sigma_2 N-\sigma_1 M}{\pi(\sigma_2^2-\sigma_1^2)}.
   \end{equation}
   $\left(2\right)$ If $\left(\sigma_2-\sigma_1\right)\left(\sigma_2+\sigma_1\right)>0$, then the zeros $\{\lambda_{k}\}_{k\in\mathbb{Z}_0}$ of $\varphi(\lambda)$ have the following asymptotic behavior:
   \begin{equation}
   \begin{aligned}
      \lambda_{k}=\frac{\pi}{a}\left(\lvert k\rvert-1\right){\rm sgn} k+\frac{i}{2a}\ln{\left(\frac{|\sigma_2+\sigma_1|}{|\sigma_2-\sigma_1|}\right)}+\frac{P}{k}+\frac{\gamma_k}{k}, \quad  \lvert k\rvert\to\infty.
   \end{aligned}
   \end{equation}
  where $P$ is given \eqref{A.9} and $\{\gamma_{k}\}_{k=-\infty,k\neq0}^{\infty}\in l_{2}$.
\end{lemma}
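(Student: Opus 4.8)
The plan is to mimic the proof of Lemma 7.1.3 in \cite{MV}, using our Lemma \ref{LA1} in place of its computation of the unperturbed zeros. The first step is to pass to exponential form: by \eqref{A7} one rewrites \eqref{A.7}, \eqref{A.1} as
\begin{equation*}
\varphi(\lambda)=\tfrac12 e^{i\lambda a}\bigl[(\sigma_1+\sigma_2)\lambda-i(M+N)\bigr]+\tfrac12 e^{-i\lambda a}\bigl[(\sigma_1-\sigma_2)\lambda+i(M-N)\bigr]+\psi(\lambda),
\end{equation*}
so that, away from the finitely many zeros of $(\sigma_1+\sigma_2)\lambda-i(M+N)$, the equation $\varphi(\lambda)=0$ becomes the fixed-point relation
\begin{equation*}
e^{2i\lambda a}=\frac{(\sigma_2-\sigma_1)\lambda+i(N-M)-2e^{-i\lambda a}\psi(\lambda)}{(\sigma_1+\sigma_2)\lambda-i(M+N)}=:R(\lambda).
\end{equation*}
Since $\psi$ is bounded on horizontal strips and $R(\lambda)\to\frac{\sigma_2-\sigma_1}{\sigma_2+\sigma_1}$ as $|\lambda|\to\infty$, which is exactly the constant in \eqref{A11}, the large zeros of $\varphi$ must cluster near the zeros $\{\lambda_k^{(1)}\}$ of $\phi^{(1)}$ located in Lemma \ref{LA1}.

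The second step is localization by Rouch\'e's theorem. On circles of a fixed small radius centered at each $\lambda_k^{(1)}$ (these nodes being $\pi/a$-separated on the horizontal line $\mathrm{Im}\,\lambda=\frac{1}{2a}\ln\bigl|\frac{\sigma_2+\sigma_1}{\sigma_2-\sigma_1}\bigr|$), one has $|\phi^{(1)}(\lambda)|\ge c|k|$ because $|\dot\phi^{(1)}(\lambda_k^{(1)})|$ grows like $|k|$, whereas $|M\sin\lambda a-iN\cos\lambda a|=O(1)$ and $\psi(\lambda)=O(1)$ there; hence $|\varphi-\phi^{(1)}|<|\phi^{(1)}|$ for $|k|$ large, and $\varphi$ has exactly one, necessarily simple, zero $\lambda_k$ in each such disk, with $\lambda_k=\lambda_k^{(1)}+o(1)$. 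Substituting the rough bound $\lambda_k=\frac{\pi k}{a}+O(1)$ back into $R(\lambda_k)$ and taking arguments upgrades this to $\lambda_k=\lambda_k^{(1)}+O(1/k)$. The two cases of the lemma are then treated uniformly: they differ only in the indexing of $\{\lambda_k^{(1)}\}$, and in both $\lambda_k^{(1)}=\frac{\pi k}{a}+O(1)$ and $1/\lambda_k^{(1)}=\frac{a}{\pi k}+O(1/k^2)$.

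The third step is the refinement. Write $\lambda_k=\lambda_k^{(1)}+\delta_k$; since $\lambda_k^{(1)}$ solves $e^{2i\lambda a}=\frac{\sigma_2-\sigma_1}{\sigma_2+\sigma_1}$, dividing the fixed-point relation and taking logarithms gives
\begin{equation*}
2ia\,\delta_k=\ln\!\left(1+\frac{i(N-M)}{(\sigma_2-\sigma_1)\lambda_k}+\frac{i(M+N)}{(\sigma_1+\sigma_2)\lambda_k}-\frac{2e^{-i\lambda_k a}\psi(\lambda_k)}{(\sigma_2-\sigma_1)\lambda_k}+O\!\bigl(\tfrac{1}{\lambda_k^2}\bigr)\right).
\end{equation*}
Expanding $\ln(1+x)=x+O(x^2)$ with $1/\lambda_k=\frac{a}{\pi k}+O(1/k^2)$ then yields
\begin{equation*}
\delta_k=\frac{1}{2\pi k}\left(\frac{N-M}{\sigma_2-\sigma_1}+\frac{M+N}{\sigma_1+\sigma_2}\right)+\frac{\gamma_k}{k}=\frac{\sigma_2N-\sigma_1M}{\pi(\sigma_2^2-\sigma_1^2)}\cdot\frac1k+\frac{\gamma_k}{k},
\end{equation*}
where the elementary identity $\frac{N-M}{\sigma_2-\sigma_1}+\frac{M+N}{\sigma_1+\sigma_2}=\frac{2(\sigma_2N-\sigma_1M)}{\sigma_2^2-\sigma_1^2}$ produces the constant $P$ of \eqref{A.9}. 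The remainder $\gamma_k/k$ collects the $O(1/\lambda_k^2)=O(1/k^2)$ terms (square-summable) and the contribution of $\psi$.

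The main obstacle is to show this last contribution, $\frac{2e^{-i\lambda_k a}\psi(\lambda_k)}{(\sigma_2-\sigma_1)\lambda_k}$, has the form $\gamma_k'/k$ with $\{\gamma_k'\}\in l_2$. On the relevant horizontal line $|e^{-i\lambda a}|$ equals the fixed constant $\bigl|\tfrac{\sigma_2+\sigma_1}{\sigma_2-\sigma_1}\bigr|^{1/2}$, so it suffices to prove $\{\psi(\lambda_k)\}\in l_2$. Since $\psi$ is entire of exponential type $\le a$ with $\psi|_{\mathbb R}\in L^2(\mathbb R)$, by Paley--Wiener $\psi(\lambda)=\int_{-a}^{a}f(t)e^{i\lambda t}\,dt$ with $f\in L^2(-a,a)$; hence $\psi(\,\cdot+ic)$ is again of exponential type $\le a$ and lies in $L^2(\mathbb R)$, and the Plancherel--P\'olya sampling inequality, applied to the uniformly separated real sequence $\{\mathrm{Re}\,\lambda_k\}$, gives $\sum_k|\psi(\lambda_k)|^2<\infty$ (the fixed vertical shift and $\delta_k\to0$ only change the estimate by a bounded factor). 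This is precisely where the hypotheses ``$\psi$ of exponential type $\le a$, $\psi\in L^2$'' enter; the remaining bookkeeping is the content of \cite[Lemma 7.1.3]{MV} (and, in the model case $\psi\equiv0$, of Lemma \ref{LA1}).
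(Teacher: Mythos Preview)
Your argument is correct and follows essentially the same route as the paper: Rouch\'e localization near the zeros $\lambda_k^{(1)}$ of $\phi^{(1)}$, followed by a first-order expansion to extract the $P/k$ term and an $l_2$ remainder coming from $\{\psi(\lambda_k)\}$. The only cosmetic difference is that you pass to the exponential fixed-point form $e^{2i\lambda a}=R(\lambda)$ and take logarithms, while the paper expands $\varphi(\lambda_k^{(1)}+\varepsilon_k)=0$ via trigonometric addition formulas and solves for $\sin(\varepsilon_k a)$; the two computations are equivalent. One small slip: in your fixed-point relation the factor multiplying $\psi(\lambda)$ should be $e^{i\lambda a}$, not $e^{-i\lambda a}$ (multiply the exponential form of $\varphi$ by $2e^{i\lambda a}$); this is harmless since on the line $\mathrm{Im}\,\lambda=\tfrac{1}{2a}\ln\bigl|\tfrac{\sigma_2+\sigma_1}{\sigma_2-\sigma_1}\bigr|$ both $|e^{\pm i\lambda a}|$ are fixed constants and the $l_2$ conclusion is unaffected. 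Your explicit justification of $\{\psi(\lambda_k)\}\in l_2$ via Paley--Wiener and the Plancherel--P\'olya sampling inequality is a welcome addition; the paper simply asserts this fact.
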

\begin{proof}
The proof is similar to the proof of Lemma 7.1.3 in \cite{MV}. But for the  convenience of readers, we just give the main idea of the proof.

Denote $G_\delta:=\{\lambda\in\mathbb{C}: \lvert \lambda-\lambda_{k}^{(1)}\rvert\geq\delta>0, k\in\mathbb{Z}_0\}$.
Clearly, there exist positive constants $C_\delta$ and $K$ such that
\begin{equation}\label{A10}
    \lvert\phi^{(1)}(\lambda)\rvert\geq C_\delta\lvert \lambda \rvert e^{a\lvert{\rm Im}\lambda \rvert}, \quad \lambda \in G_\delta,\quad \lvert \lambda \rvert\geq \lambda^* ,
\end{equation}
for sufficiently large $\lambda^*=\lambda^*(\delta)$, and
\begin{equation}\label{A.12}
    \lvert \varphi(\lambda)-\phi^{(1)}(\lambda)\rvert\le Ke^{a\lvert{\rm Im}\lambda \rvert}.
\end{equation}
It follows  that
\begin{equation}
    \lvert\phi^{(1)}(\lambda)\rvert>\lvert \varphi(\lambda)-\phi^{(1)}(\lambda)\rvert,\quad \lambda\in G_\delta\cap \{\lambda:|\lambda|\ge \lambda^*\},
\end{equation}
 for sufficiently large $\lambda^*$. Therefore, there exist sufficient large numbers $R_k$ and sufficiently small numbers $\delta_k>0$ such that
 \begin{equation}
  \varphi(\lambda)\ne0, \quad \lvert \phi^{(1)}(\lambda)\rvert>\lvert \varphi(\lambda)-\phi^{(1)}(\lambda)\rvert,\quad \lambda\in\Gamma_{R_k}\cup \gamma_{\delta_k},
\end{equation}
where
\begin{equation}
    \Gamma_{R_k}:=\{\lambda\in\mathbb{C}: \lvert \lambda \rvert=R_k\},\quad \gamma_{\delta_k}:=\{\lambda\in\mathbb{C}: \lvert \lambda-\lambda_{k}^{(1)}\rvert=\delta_k\}.
\end{equation}
Using  Rouch\`{e}'s theorem, we conclude that $\varphi(\lambda)$ and $\phi^{(1)}(\lambda)$ have the same number of zeros, counted with multiplicity, inside $\Gamma_{R_k}$ and $\gamma_{\delta_k}$. In particular,  in $\gamma_{\delta_k}$ there is exactly one zero of $\varphi(\lambda)$. It follows
that
\begin{equation}\label{A12}
    \lambda_k=\lambda_{k}^{(1)}+\varepsilon_k,\quad \varepsilon_k=o(1),\quad |k|\to\infty.
\end{equation}
Substituting \eqref{A12} into \eqref{A.7}, we get  for large $|k|$  that
    \begin{align}\label{XC0}
    \notag
         0=&\varphi(\lambda_k)=\phi^{(1)}(\lambda_{k}^{(1)}+\varepsilon_k)+M \sin{(\lambda_{k}^{(1)}+\varepsilon_k) a}-iN \cos{(\lambda_{k}^{(1)}+\varepsilon_k) a}+\psi(\lambda_k)\\
         \notag=& (\sigma_1\lambda_{k}-iN)\cos{(\lambda_{k}^{(1)}+\varepsilon_k) a}+(i\sigma_2\lambda_{k}+M)\sin{(\lambda_{k}^{(1)}+\varepsilon_k) a}+\psi(\lambda_k)\\
         \notag=&(\sigma_1\lambda_{k}-iN)\left[\cos{(\lambda_{k}^{(1)} a)}\cos{(\varepsilon_k a)}-\sin{(\lambda_{k}^{(1)} a)}\sin{(\varepsilon_k a)}\right]\\
         \notag &+(i\sigma_2\lambda_{k}+M)\left[\sin{(\lambda_{k}^{(1)} a)}\cos{(\varepsilon_k a)}+\cos{(\lambda_{k}^{(1)} a)}\sin{(\varepsilon_k a)}\right]+\psi(\lambda_k).\\
          \notag=&\psi(\lambda_k)+\frac{\lambda_k}{\lambda_k^{(1)}}\phi^{(1)}(\lambda_k^{(1)})\cos (\varepsilon_ka)+\lambda_k\left[-\sigma_1\sin{(\lambda_{k}^{(1)} a)}+i\sigma_2\cos{(\lambda_{k}^{(1)} a)}\right]\sin{(\varepsilon_k a)}\\
         &+\left[M\sin{(\lambda_{k}^{(1)} a)}-iN\cos{(\lambda_{k}^{(1)} a)}\right]\cos{(\varepsilon_k a)}+\left[M\cos{(\lambda_{k}^{(1)} a)}+iN\sin{(\lambda_{k}^{(1)} a)}\right]\sin{(\varepsilon_k a)}.
    \end{align}
    Using \eqref{A11} and \eqref{A7}, we have for large $|k|$ that
    \begin{equation}\label{XC}
   -\sigma_1\sin{(\lambda_{k}^{(1)} a)}+i\sigma_2\cos{(\lambda_{k}^{(1)} a)}=ie^{-i\lambda_k^{(1)}a}(\sigma_2-\sigma_1),
    \end{equation}
    \begin{equation}\label{XC1}
   M\sin{(\lambda_{k}^{(1)} a)}-iN\cos{(\lambda_{k}^{(1)} a)}=ie^{-i\lambda_k^{(1)}a}\frac{M\sigma_1-N\sigma_2}{\sigma_2+\sigma_1},
    \end{equation}
    \begin{equation}\label{XC2}
    M\cos{(\lambda_{k}^{(1)} a)}+iN\sin{(\lambda_{k}^{(1)} a)}=e^{-i\lambda_k^{(1)}a}\frac{M\sigma_2-N\sigma_1}{\sigma_2+\sigma_1}.
    \end{equation}
Substituting \eqref{XC}--\eqref{XC2} into \eqref{XC0}, and noting $ \phi^{(1)}(\lambda_{k}^{(1)})=0$, we have
\begin{equation}\label{XC3}
 \sin{(\varepsilon_k a)}=\frac{i\left(N\sigma_2-M\sigma_1\right)\cos(\varepsilon_ka)-\psi(\lambda_k)(\sigma_2+\sigma_1)e^{i\lambda_k^{(1)}a}}{i\lambda_k(\sigma_2^2-\sigma_1^2)+\left(M\sigma_2-N\sigma_1\right)} .
\end{equation}
Note that
\begin{equation}\label{XC4}
 \sin{\varepsilon_k a}=\varepsilon_k a+O(\lvert \varepsilon_k\rvert^{-3}), \ \cos{\varepsilon_k a}=1+O(\lvert \varepsilon_k\rvert^{-2}), \ \lambda_k=\frac{k\pi }{a}[1+O(1/k)], \ |k|\to \infty.
\end{equation}
Substituting \eqref{XC4} into \eqref{XC3}, and noting $\{\psi(\lambda_k)\}\in l_2$, we get
\begin{equation}
    \varepsilon_k=\frac{P}{k}+\frac{\gamma_k}{k}.
\end{equation}
The proof is complete.
\end{proof}

\end{document}